\newcommand\encircle[1]{%
  \tikz[baseline=(X.base)] 
    \node (X) [draw, shape=circle, inner sep=0] {\strut #1};}
\let\oldlabel=\label
\def\Drellabel{\marginparsep=1em
    \def\label##1{\oldlabel{##1}\ifmmode\else\ifinner\else
         \marginpar{{\footnotesize\ \\ \tt
                    ##1}}\fi\fi}}
\def\inte{\operatorname{int}}
\def\conv{\operatorname{conv}}
\def\vertex{\operatorname{vert}}
\def\Im{\operatorname{Im}}
\def\Aff{\operatorname{Aff}}
\def\B{\operatorname{B}}
\def\D{\underset\Delta}
\def\d{{\text{d}_{\text H}}}
\def\q{\underset{\bf q}}
\def\Infty{\underset{\infty}}
\def\R{{\bf R}}
\def\SS{{\bf S}}
\def\RR{{\mathbb R}}
\def\QQ{{\mathbb Q}}
\def\ZZ{{\mathbb Z}}
\def\NN{{\mathbb N}}
\def\FF{{\mathbb F}}
\def\POL{\mathbf{Pol}}
\def\np{{\operatorname{NPol}}}
\def\cones{\operatorname{Cones}}
\def\viii{{\text{\tiny\encircle{i}}}}
\def\vii{{\text{\tiny\encircle{ii}}}}
\def\v{\text{\tiny\encircle{iii}}}
\def\vi{\text{\tiny\encircle{iv}}}
\def\ix{\text{\tiny\encircle{v}}}
\let\phi=\varphi
\let\theta=\vartheta
\let\epsilon=\varepsilon
\newtheorem{lemma}{Lemma}[section]
\newtheorem{corollary}[lemma]{Corollary}
\newtheorem{proposition}[lemma]{Proposition}
\theoremstyle{definition}
\newtheorem{definition}[lemma]{Definition}
\newtheorem{remark}[lemma]{Remark}
\newenvironment{customthm}[1]
  {\innercustomthm}
  {\endinnercustomthm}
\begin{document}

\title[The pyramidal growth]{The pyramidal growth}

\begin{abstract}
Can one build an arbitrary polytope from any polytope inside by iteratively stacking pyramids onto facets, without losing the convexity throughout the process? We prove that this is indeed possible for (i) 3-polytopes, (ii) 4-polytopes under a certain infinitesimal quasi-pyramidal relaxation, and (iii) all dimensions asymptotically. The motivation partly comes from our study of $K$-theory of monoid rings and of certain posets of discrete-convex objects.
%\medskip\noindent Peut-on construire un polytope arbitraire \`a partir de n'importe quel polytope \`a l'int\'erieur en empilant it\'erativement des pyramides sur des facettes, sans perdre la convexit\'e tout au long du processus? Nous prouvons que cela est en effet possible pour (i) 3-polytopes, (ii) 4-polytopes sous une certaine relaxation quasi-pyramidale infinit\'esimale, et (iii) toutes les dimensions asymptotiquement. La motivation vient en partie de notre étude du K-th\'eorie des anneaux monoïdes et de certains posets d'objets discrets-convexes. 
\end{abstract}

\author{Joseph Gubeladze}

\address{Department of Mathematics\\
         San Francisco State University\\
         1600 Holloway Ave.\\
         San Francisco, CA 94132, USA}
\email{soso@sfsu.edu}

\thanks{}

\subjclass[2010]{Primary 52B10, 52B11; Secondary 52B05}

\keywords{Convex polytope, pyramidal growth, quasi-pyramidal growth}

\maketitle

\section{Main results}\label{Main}

A \emph{polytope} in this paper means the convex hull of a finite subset of $\oplus_\NN\RR$, where we have the usual notion of convexity, Euclidean norm, angle between two finite dimensional affine spaces that meet in codimension one, topological closure etc. 

The topological closure of a subset $X\subset\oplus_\NN\RR$ will be denoted by $\overline X$. 

The \emph{Hausdrorff distance} between two nonempty compact subsets $X,Y\subset\oplus_\NN\RR$ will be denoted by $\d(X,Y)$ \cite[Ch.~1.2]{Grunbaum}. For a sequence of polytopes $\{P_i\}_\NN$ and a polytope $Q$, we write $\underset{i\to\infty}\lim P_i=Q$ if $\underset{i\to\infty}\lim\d(P_i,Q)=0$.

The set of polytopes of dimension at most $d$ will be denoted by $\POL(d)$. The set of all polytopes will be denoted by $\POL(\infty)$. For a subfield $k\subset\RR$, the corresponding sets of polytopes with vertices in $\oplus_\NN k$ will be denoted by $\POL_k(d)$ and $\POL_k(\infty)$. 

Let $P$ be a polytope. A \emph{pyramid over} or \emph{with base} $P$ is the convex hull $Q$ of $P$ and a point $v$, not in the affine hull of $P$. The point $v$ is the \emph{apex} of $Q$.

Let $k\subset\RR$ be a subfield, $d\le\infty$, and $\POL$ be one of the sets $\POL(d)$, $\POL_k(d)$.

\begin{definition}\label{Pyrmidal_growth}
(a) A pair of polytopes $P\subset Q$ in $\POL$ forms a \emph{pyramidal extension} if $\Delta=\overline{Q\setminus P}$ is a pyramid and $\Delta\cap P$ is a facet of $\Delta$, i.e., either $Q$ is a pyramid over $P$ or obtained from $P$ by stacking a pyramid onto a facet. For a pyramidal extension $P\subset Q$ we write $P\D\subset Q$.

\noindent(b) The partial order on $\POL$, generated by the pyramidal extensions within $\POL$, will be denoted by $\D\le$ and called the \emph{pyramidal growth}.

\noindent(c) The \emph{transfinite pyramidal growth} $\Infty\le$ is the smallest partial order  on $\POL$, containing $\D\le$ and satisfying $P\Infty\le Q$ whenever there exists an ascending sequence  $P=P_0\Infty\le P_1\Infty\le P_2\Infty\le\ldots$ with  $Q=\underset{i\to\infty}\lim P_i$. 
\end{definition}

It is easily shown that $\D\le$ is the inclusion order on $\POL(2)$; see Corollary \ref{polygons}. In Figure \ref{2growth_fig}, the outer quadrilateral is grown from the hexagon inside by stacking triangles onto edges in the indicated order:
\begin{figure}[h!]
\caption{}
\vspace{.1in}
\includegraphics[%trim = 0mm 0.1in 0mm 0.1in, clip, 
scale=1.5]{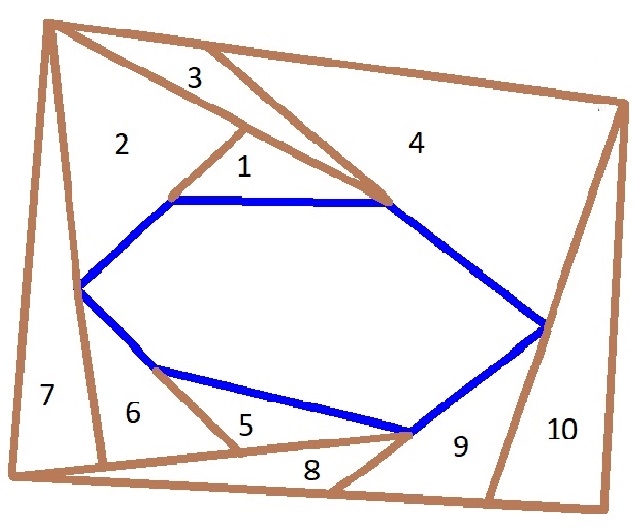}\label{2growth_fig}
\end{figure}

Pyramidal extensions are more general than the extensions, used in the definition of \emph{stacked polytopes} \cite[Ch.~3.19]{Brondsted}\cite[Ch.~10.6]{Grunbaum}, along with their direct generalization to arbitrary initial polytopes: by only allowing the stackings of pyramids onto facets when none of the codimension 2 faces disappears we get a new partial order on polytopes. Obviously, it does not coincide with the inclusion order. We do not know whether the transfinite completion of this order is the same as $\Infty\le$.

\begin{definition}\label{Quasi_growth}
(a) A \emph{quasi-pyramidal growth} in $\POL$, is a pair of polytopes $P\subset Q$, admitting within $\POL$ a finite sequence of polytopes
\begin{align*}
P=P_0\subset P_1\subset\ldots\subset P_n=Q
\end{align*}
and pyramidal extensions
$$
P'_i\D\subset P_i,\qquad i=1,\ldots,n,
$$
such that 
$$
P_0\subset P_i'\subset P_{i-1},\qquad i=1,\ldots,n.
$$
The resulting partial order on $\POL$ will be denoted by $\q\le$. 

\noindent(b) The \emph{quasi-pyramidal defect} of a pair of polytopes $P\q\le Q$ in $\POL$ is defined by
\begin{align*}
\delta(P,Q)=\inf\bigg\{\sum \d(P_i',P_i)\ \bigg |\ P'_i\subsetneq P_{i-1}\bigg\}, 
\end{align*}
where the infimum is taken over the sequences as in the part (a).
\end{definition}

Informally, the quasi-pyramidal defect measures how close an equality $P\q\le Q$ is to the inequality $P\D\le Q$. Observe that the first extension $P_0\subset P_1$ in Definition \ref{Quasi_growth}(a) is necessarily pyramidal.

\medskip Recently several notions of minimal enlargements of polytopes appeared in the literature: extensions of lattice 3-polytopes by adding one lattice point play an important role in a classification of lattice 3-polytopes \cite{Paco}; connectivity of the graph on the set of polytopes is studied in \cite{Elementary_moves}, where two polytopes form an edge  if their vertex sets differ by adding/deleting one element.

Let $k\subset\RR$ be a subfield. Our main results are:

\begin{customthm}{A}\label{Theorem_A}
\emph{$\Infty\le$ is the inclusion order on $\POL_k(\infty)$.} 
\end{customthm}

\begin{customthm}{B}\label{Theorem_B}
\emph{$\D\le$ is the inclusion order on $\POL_k(3)$.}
\end{customthm}

The classical proof of the \emph{Steinitz Theorem} \cite[Ch.~13.1]{Grunbaum} implies that the set of 3-polytopes in $\RR^3$ is connected via combinatorial  modifications, which can be realized geometrically as `up-down' pyramidal extensions, when the pyramids being stacked are simplices. On the other hand, Theorem \ref{Theorem_B} implies that the poset of 3-polytopes in $\RR^3$, ordered by $\D\le$, is topologically contractible, i.e., the geometric realization of the corresponding order complex is a contractible space: every finite system in this poset has an upper bound%\todo{fixed}.

\begin{customthm}{C}\label{Theorem_C}
\emph{If $\D\le$ is the inclusion order on $\POL_k(d)$ then $\q\le$ is the inclusion order on $\POL_k(d+1)$. Moreover, $\delta(P,Q)=0$ for any two polytopes $P\subset Q$ in $\POL_k(d+1)$.}
\end{customthm}

\medskip In particular, $\q\le$ is the inclusion order on $\POL_k(4)$ and $\delta(P,Q)=0$ for any two polytopes $P\subset Q$ in $\POL_k(4)$. The proof of Theorem \ref{Theorem_C} also implies that the same result on quasi-pyramidal growth can be proved unconditionally in all dimensions if a local conical version of the induction step can be worked out; see Remark \ref{Almost_quasi}.

\bigskip\noindent{\bf\emph{$K$-theory.}} The poset $\big(\POL_\QQ(\infty),\D\le\big)$ is implicit in our $K$-theoretic works on monoid rings. Informally, a pyramidal extension $P\D\subset Q$ represents a minimal enlargement of a polytope, allowing to transfer certain $K$-theoretic information on $P$ to the polytope $Q$. More precisely, rational polytopes give rise to  submonoids of $\ZZ^d$ and, when $P\D\subset Q$, certain $K$-theoretic objects over the monoid ring, associated with $Q$, are extended from the submonoid ring, associated with $P$. Results of this type in various $K$-theoretic scenarios are obtained in %\cite{Anderson,Classical,Nilpotence,Steinberg,Elrows}. 
\cite{Anderson,Nilpotence,Elrows}.
Assume $\D\le$ coincides with $\subset$. Then the $K$-theoretic objects in question, defined over the monoid ring of $Q$, extend from polynomial rings because there is a rational simplex $P\subset Q$, defining a free commutative monoid. But $K$-theory of the monoid rings of free commutative monoid, which is the same as polynomial rings, is one of the best understood topics in algebraic $K$-theory -- the so-called \emph{homotopy invariance} properties. Most likely, the relations $\subset$ and $\D\le$ are different; see below. In the mentioned works we used the following substitute, which suffices for the $K$-theoretic purposes: for two rational polytopes $P\subset Q$, there is a sequence of rational polytopes of the form
\begin{align*}
P=&P_0,\ P_1,\ \ldots,\ P_n=Q\\
&P_i\subset Q\ \ \text{and}\ \  P_{i-1}\D\subset P_i\ \ \text{or}\ \ P_{i-1}\supset P_i,\\
&\qquad\qquad\qquad\qquad\qquad\qquad i=1,\ldots,n.
\end{align*}

With a small additional $K$-theoretic work, one can show that the relevant $K$-theoretic information still can be transferred from $P$ to $Q$ if $P\q\le Q$ or $P\Infty\le Q$. Thus Theorem \ref{Theorem_A} allows to avoid the non-monotonicity fluctuations $P_{i-1}\supset P_i$ above in the process of descending from the larger polytope $Q$ to $P$%\todo{fixed}.

\medskip\noindent{\bf\emph{Quantum jumps and rational cones.}}
In \cite{Quantum,Cones} we explored two posets: (i) the poset $\np(d)$ of \emph{normal polytopes} -- essentially the projectively normal embeddings of toric varieties -- whose minimal elements have played crucial role in disproving various covering conjectures in the 1990s \cite{Unico,Caratex}, and (ii) the poset $\cones(d)$ of \emph{rational cones}, as the additive counterpart of $\np(d-1)$ via the correspondence $P\mapsto$\ the homogenization cone of $P$. The poset of cones is more amenable to arithmetic and topological analysis and can provide a handle on the poset of normal polytopes. The elementary relation in $\np(d)$, called \emph{quantum jumps,} are the extensions of normal polytopes by adding one lattice point. The order in $\cones(d)$ is generated by the extensions of the monoids of the form $C\cap\ZZ^d$, $C\subset\RR^d$ a rational cone, by adding one generator. Currently, even the existence of isolated points in $\np(d)$ is not excluded for $d\ge4$, whereas the order in $\cones(d)$ is conjectured to be the inclusion order for any $d$. The poset $\big(\POL(d),\q\le\big)$ is a continuous analogue of $\cones(d+1)$ and so it too is expected to be ordered by inclusion. This is partially confirmed by Theorem \ref{Theorem_C}. The poset $\big(\POL(d),\D\le\big)$ is a continuous version of another poset, also introduced in \cite[\S2B]{Cones}. It is generated by the so-called \emph{height 1 extensions over facets} of cones, has fewer relations than $\cones(d+1)$, and allows extensive computational experimentation.

\medskip\noindent{\bf\emph{Do the pyramidal growth and inclusion order coincide?}} If $P\D\subset Q$ and $P$ is a rational polytope, then $Q$ is combinatorially equivalent to a rational polytope. Whether the same can be said when $P\D\le Q$ is an interesting question. In view of the existence of polytopes of irrational type (e.g., \cite[Ch.~5.4]{Grunbaum}), the positive answer would imply that $\D\le$ and $\subset$ are different in the corresponding dimension over the corresponding subfield $k\subset\RR$

Another question of independent interest, in the spirit of \emph{approximations by polytopes} \cite{Gruber}, is whether there is a sequence $0=P_0\ \D\le\ P_1\ \D\le\ \ldots$ in $\POL(d)$, starting with the origin $0\in\RR^d$ and such that $\bigcup_{i=0}^\infty P_i=\overset{\tiny\circ}\B_d$,
where $\overset{\tiny\circ}\B_d$ is the open unit $d$-ball. The negative answer would show that $\D\le$ is not the inclusion order on $\POL_\QQ(d)$.

\begin{remark}\label{Fields} (a) In the rest of the paper we only consider polytopes in $\POL(d)$. But adjusting the arguments to $\POL_k(d)$ for any subfield $k\subset\RR$ is straightforward.  

\medskip\noindent(b) The proof of the main results yields an algorithm for building up arbitrary 3-polytopes via pyramidal growth and arbitrary 4-polytopes via quasi-pyramidal growth, arbitrarily close to pyramidal growth.
\end{remark}

\section{Notation}\label{Notation}

Our references for basic facts on polytopes are \cite[Ch.1]{Kripo} and \cite{Grunbaum}. The relatively standard notation/terminology we will use is as follows.

A \emph{space} refers to a finite-dimensional affine subspace of $\oplus_\NN\RR$. A \emph{half-space} of a space will mean a \emph{closed} affine half-space.

The \emph{convex} and \emph{affine hulls} of a subset $X\subset\oplus_\NN\RR$ will be denoted by $\conv(X)$ and $\Aff(X)$, respectively. For two points $x,y\in\oplus_\NN\RR$ we will use $[x,y]$ for $\conv(x,y)$.

The \emph{conical hull} of a subset $X\subset\oplus_\NN\RR$ will be denoted by $\RR_+X$, i.e., $\RR_+X=\{cx\ |\ x\in X,\ c\ge0\}$.

For a finite dimensional convex set $X$, by $\inte(X)$ we denote the \emph{relative interior} of $X$. The \emph{boundary} of $X$ is $\partial X=X\setminus\inte(X)$. 

For a polytope $P$, the set of its \emph{vertices} and \emph{facets} will be denoted by $\vertex(P)$ and $\FF(P)$, respectively.

\medskip We also need a more specialized notation.

\medskip\noindent(i) Let $P,Q$ be polytopes, such that $f=P\cap R$ is a common face and, moreover, $\dim(\conv(P,Q))=\dim P+\dim Q-\dim f$. Then $\conv(P\cup Q)$ is the \emph{colimit} in the category of convex polytopes and affine maps of the diagram of face embeddings $P_{\nwarrow\underset f\ \nearrow}Q$; for the categorial analysis of polytopes see \cite{Functors}. Correspondingly, instead of $\conv(P,Q)$ we will use the more informative notation  $P\underset f\vee Q$. Figure \ref{PVQ_fig} represents the construction for two pentagons, sharing an edge: 
\begin{figure}[h!]
\caption{}
\vspace{.1in}
\includegraphics[%trim = 0mm 0.1in 0mm 0.1in, clip, 
scale=1]{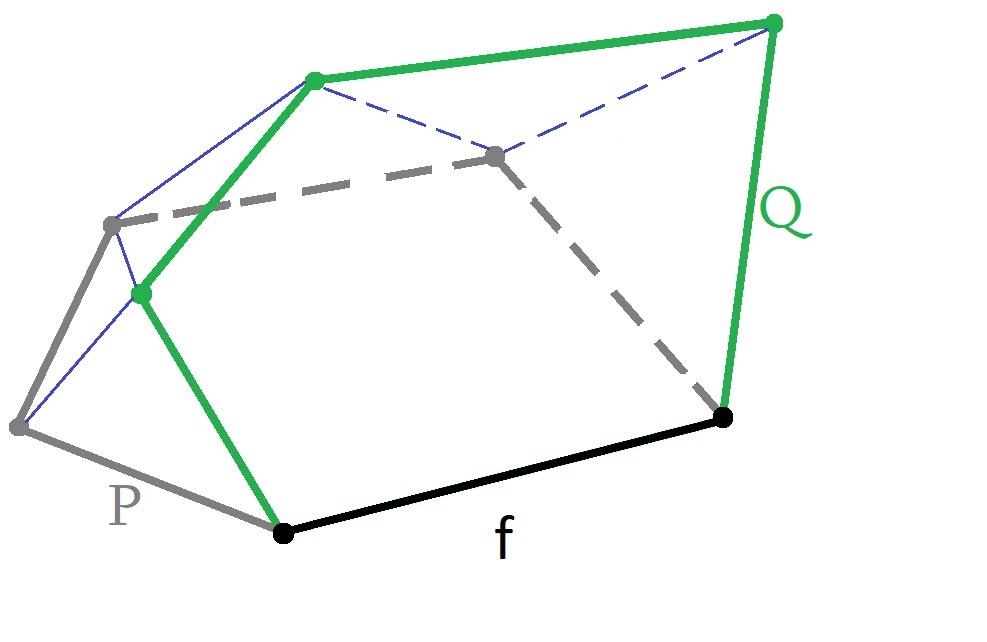}\label{PVQ_fig}
\end{figure}

\medskip\noindent(ii) For a full-dimensional polytope $P$ in a space $H$, a point $v\in H\setminus P$, and a not necessarily proper face $P'\subset P$, we denote by $\partial_v(P')^+$ and $\FF_v(P')^+$ the part of $\partial(P')$ and the set of facets of $P'$, respectively, whose visibility is not obstructed by $P$. We skip $P$ from the notation because the polytope $P$ will be clear from the context. In Figure \ref{VISIBLE}, the set $\FF_v(P)^+$ consists of two triangles and, for the facet $P'\subset P$, the set $\FF_v(P')^+$ consists of two edges:
\begin{figure}[h!]
\caption{}
\vspace{.1in}
\includegraphics[%trim = 0mm 0.1in 0mm 0.1in, clip, 
scale=1]{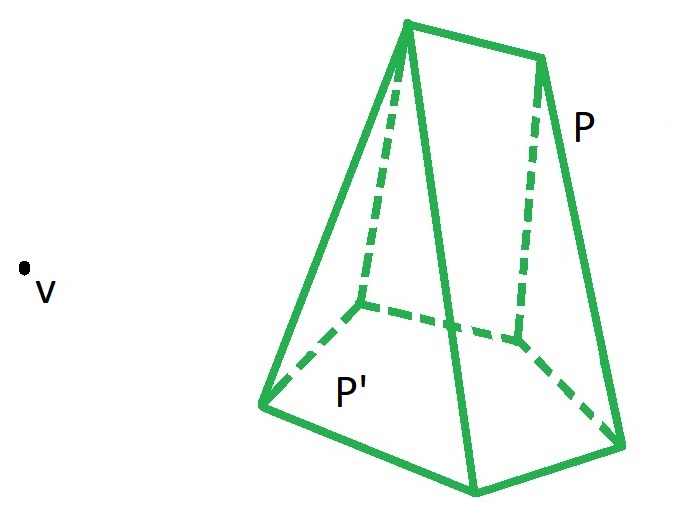}\label{VISIBLE}
\end{figure}

\medskip\noindent(iii) For a polytope $P$ and a facet $f\subset P$, we denote by $\Aff_f(P)^+\subset\Aff(P)$ the half-space, bounded by $\Aff(f)$ and containing $P$ (Figure \ref{half_fig}).
\begin{figure}[h!]
\caption{}
\vspace{.1in}
\includegraphics[%trim = 0mm 0.1in 0mm 0.1in, clip, 
scale=1.5]{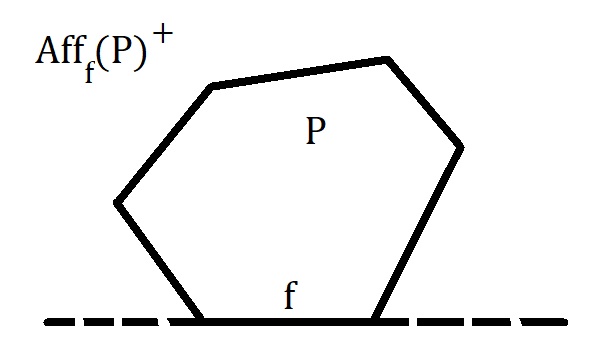}\label{half_fig}
\end{figure}

%\medskip
\noindent(iv) For a space $H$ and a point $w\notin H$, we denote by $H_w^-$ the half-space in $\Aff(H,w)$, not containing $w$ (Figure \ref{half_neg_fig}).
\begin{figure}[h!]
\caption{}
\vspace{.1in}
\includegraphics[%trim = 0mm 0.1in 0mm 0.1in, clip, 
scale=1.4]{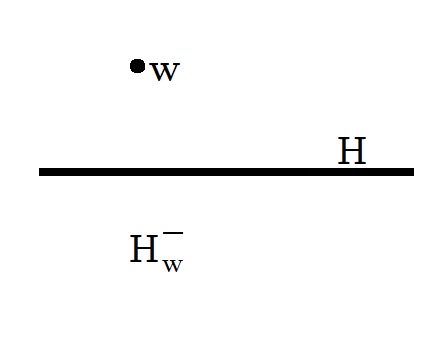}\label{half_neg_fig}
\end{figure}

\medskip\noindent(v) Consider a finite dimensional convex set $C$, a space $H$, a polytope $\pi$, and a point $z$, satisfying the conditions:

\medskip\begin{enumerate}[\rm$\centerdot$]
\item  $\pi\subset H$ and $z\in C\setminus H$,
\item $\dim(\Aff(C,H))=\dim(H)+1=\dim(\pi)+2$,
\item $C\cap H$ is contained in exactly one of the half-spaces of $H$, bounded by $\Aff(\pi)$.
\end{enumerate}
\medskip\noindent Then, for a real number $\theta\ge0$, denote by:

\medskip\begin{enumerate}[\rm$\centerdot$]
\item $H_\theta(\pi,C,z)$ the rotation of $H$ inside $\Aff(C,H)$ around $\Aff(\pi)$ by the angle $\theta$, moving the half-pace of $H$, which contains $H\cap C$, towards $z$ in such a way that $H\cap C$ stays nonempty during the rotation;
%\item $\theta_z$ the smallest positive real number, such that $z\in H_{\theta_z}(p,C,z)$;%\todo{needed1?}
\item $H_\theta(\pi,C,z)^-$ the half-space of $\Aff(C,H)$, bounded by $H_\theta(\pi,C,z)$ and not containing $z$; this half-space  exists for every sufficiently small $\theta\ge0$; this notation is a simplification of $\big(H_\theta(\pi,C,z)\big)_z^-$, which results from (iv) above.

\medskip Figure \ref{rotation_fig} represents the notation introduced:

\medskip
\begin{figure}[h!]
\caption{}
\vspace{.25in}
\includegraphics[%trim = 0mm 0.1in 0mm 0.1in, clip, 
scale=1.2]{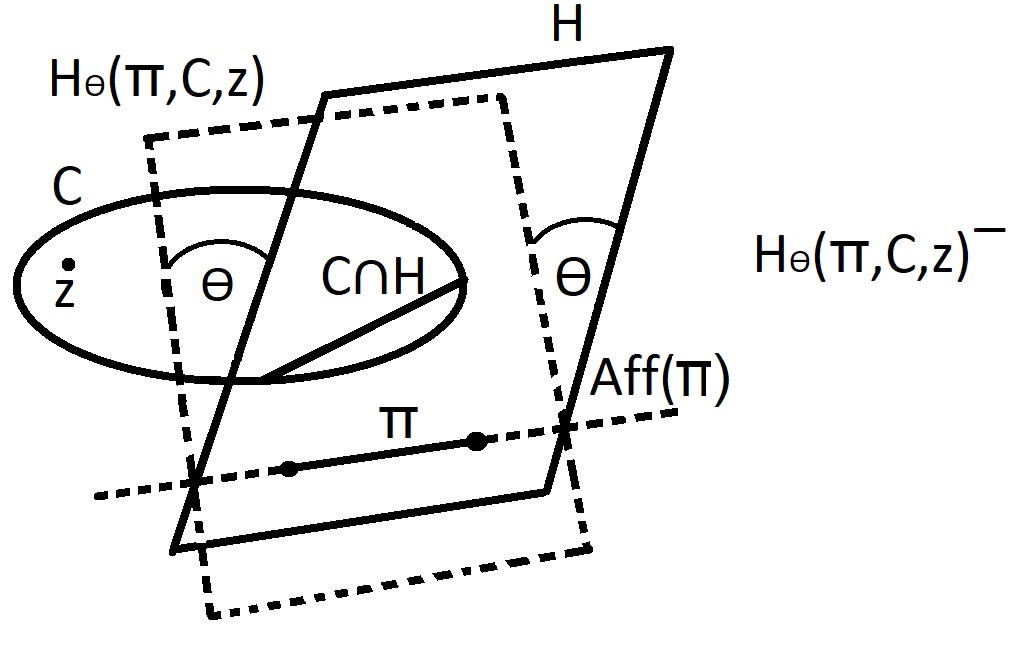}\label{rotation_fig}
\end{figure}
\end{enumerate}

\bigskip For the convenience of the reader, we will often reference to the special notation above, using \viii, \vii, \v, \vi, \ix.

\section{Outline of the proof}\label{Sketch} The proof is by induction on dimension $d$. Let $\le$ denote any of the three partial orders. In the preparatory Section \ref{v_polytopes} we show that, for dimension $d$, it is enough to have $P\le P\underset f\vee Q$ for $\dim P=\dim Q=d-1$ and $f$ a common facet.

If $Q$ is a pyramid over $f$ then polytope $P\underset f\vee Q$ is a pyramid over $P$. By inducting on the number of vertices, it is enough to show that, for a point $v$ in the half-space $\Aff_f(Q)^+$ \v\  and not in $Q$, the inequality $P\le P\underset f\vee Q$ propagates to the inequality $P\le P\underset f\vee\conv(Q,v)$. This is accomplished by gradually growing $Q$ in the sense of $\le$  towards $\conv(Q,v)$. The process of growing $Q$ breaks up into several different steps.

\medskip\noindent\emph{Theorem A.} First we show that there is an intermediate polytope $Q\subset Q_1\subset\conv(Q,v)$, such that $P\underset f\vee Q\le P\underset f\vee Q_1$ and whose facets visible from $v$ are paired with the facets of $P$ visible from $v$ so that the pairs span facets of $P\underset f\vee Q_1$. In the next step we show that for any number $\lambda<1$, sufficiently close to $1$, the homothetic transformation  -- up to a projective transformation, moving $f$ to infinity -- of the visible boundary $\partial_v(Q_1)^+$ \vii, centered at $v$ with factor $\lambda$, and $Q_1$ together span a polytope $Q_2$ with the same properties as $Q_1$ and, simultaneously, $P\underset f\vee Q_1\le P\underset f\vee Q_2$. Moreover, the process, which is summarized in Proposition \ref{Main_sequence} and represented by Figure \ref{homothety_fig}, can be iterated with respect to the same $\lambda$. This is enough for $\Infty\le$, i.e., proves Theorem A. The main vehicle for growing polytopes are the elementary moves, called the $\R$- and $\SS$-\emph{constructions}. (In Figures \ref{homothety_fig} and \ref{dim3_fig} we have not marked the edges, joining $v$ with vertices of $P$.) 

\medskip\begin{figure}[h!]
\caption{}
\vspace{.3in}
\includegraphics[%trim = 0mm 0.1in 0mm 0.1in, clip, 
scale=1.4]{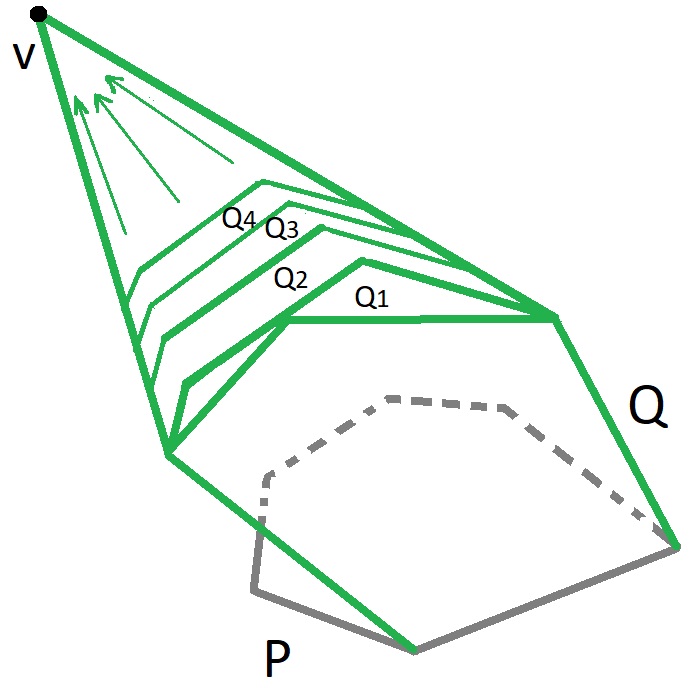}\label{homothety_fig}
\end{figure}

\bigskip\noindent\emph{Theorem B.} Since $\D\le$ is the same as $\subset$ in dimension 2, the first step for $\D\le$ as described above, reduces the problem to the inequality $P\le P\underset f\vee Q$, where $Q$ has the properties of $Q_1$ above. The main idea is to show the existence of the ascending sequence 
$$
P\underset f\vee Q\ =\ \Pi_1\ \D\le\ \Pi_2\ \D\le\ \Pi_3\ \D\le\ \ldots\ \D\le\ \Pi_n\ =\ P\underset f\vee\conv(Q,v),
$$
where $n+1$ is the number of vertices of $Q$, visible from $v$, and the facets of $\Pi_i$ in $\Aff(Q)$ grow along with the index $i$ as shown on Figure \ref{dim3_fig}:
\begin{figure}[h!]
\caption{}
\vspace{.3in}
\includegraphics[%trim = 0mm 0.1in 0mm 0.1in, clip, 
scale=1.6]{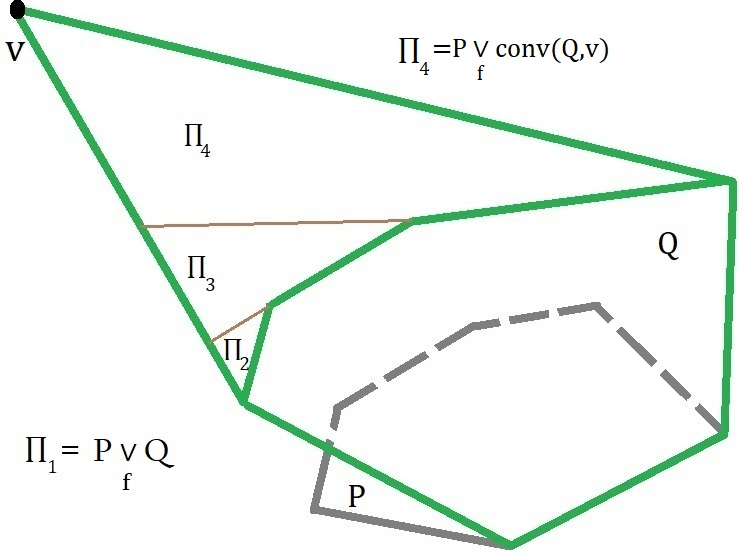}\label{dim3_fig}
\end{figure}

This is done by induction on complexity of the suspensions $\overline{\Pi_{i+1}\setminus\Pi_i}$ for $i=1,\ldots,n-1$. For instance, $\overline{\Pi_n\setminus\Pi_{n-1}}$ is of zero complexity because $\Pi_{n-1}\D\le\Pi_n$.

\bigskip\noindent\emph{Theorem C.} Using the notation in the description of the proof of Theorem A, if $n$ is sufficiently large then the polytope $Q_n$ is sufficiently close to $\conv(Q,v)$. Correspondingly, the corner cones of $P\underset f\vee Q_n$ at the vertices of $P$, visible from $v$, approximate from within the corner cones of $P\underset f\vee\conv(Q,v)$ at the same vertices. These vertices are circled in Figure \ref{induction_fig}. It can be shown that the polytope $P\underset f\vee Q_n$ can be grown along $\D\le$, staying inside $P\underset f\vee\conv(Q,v)$ and making the mentioned corner cones of $P\underset f\vee\conv(Q,v)$ exactly match with the corresponding corner cones of the new (larger) polytope $P\underset f\vee Q_n\D\le T$. This is achieved by applying the induction assumption on dimension to the cross-sections of the corner cones. In the final step, using appropriate projective and homothetic transformations, similar to the ones used in the proof of Theorem A, one chooses $n$ sufficiently large and contracts the difference between $P\underset f\vee\conv(Q,v)$ and $T$ towards the vertex $v$. Since $\q\le$ is the relaxation of $\D\le$ by discarding such small differences, Theorem C follows.  
\begin{figure}[h!]
\caption{induction}
\vspace{.15in}
\includegraphics[%trim = 0mm 0.1in 0mm 0.1in, clip, 
scale=1.6]{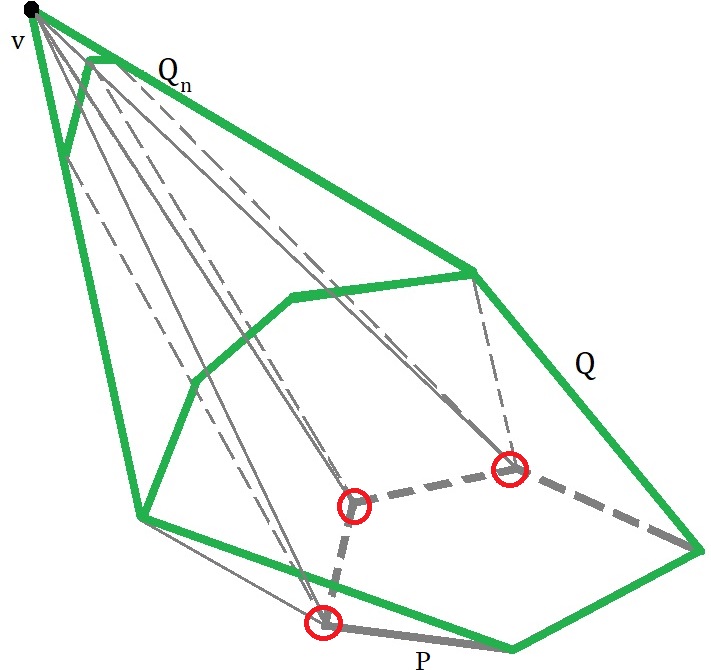}\label{induction_fig}
\end{figure}

\section{Reduction to $\vee$-polytopes}\label{v_polytopes}

In this section $\le$ denotes any of the inequalities $\D\le$, $\q\le$, and $\Infty\le$.

Consider the following statement:

\begin{equation}\label{v-assumption}
\begin{aligned}
&P\le P\underset f\vee R\ \ \text{in}\ \ \POL(d)\ \text{whenever}\ \dim R= \dim P=d-1\ \text{and}\ f=P\cap R\\ &\text{is a common facet}.\\
\end{aligned}
\end{equation}

\begin{lemma}\label{Inscribed} Assume $d\ge2$ is a natural number and \emph{(\ref{v-assumption})} holds for $d$. Assume $Q_1\subset Q_2$ are $d$-polytopes and $Q_1$ has a facet $F\subset \partial Q_2$. Then $Q_1\le Q_2$.
\end{lemma}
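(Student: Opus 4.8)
\emph{Plan of proof.} Let $H=\Aff(F)$. Since $F$ is $(d-1)$-dimensional and $F\subseteq\partial Q_2$, the relative interior $\inte F$ is a nonempty open subset of the hyperplane $H$ contained in $\partial Q_2$; hence $H$ is a supporting hyperplane of $Q_2$ and $\inte F$ lies in the relative interior of a single facet $G$ of $Q_2$ (distinct facets lie in distinct supporting hyperplanes), so $F\subseteq G$ and $\Aff(G)=H$. Fix coordinates with $H=\{x_d=0\}$ and $Q_2\subset\{x_d\ge 0\}$; then $Q_1\subset Q_2\subset\{x_d\ge 0\}$ and $F=Q_1\cap\{x_d=0\}$. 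I would argue by induction on the number $N$ of vertices of $Q_2$ not contained in $Q_1$. If $N=0$ then $Q_2=\conv(\vertex Q_2)\subseteq Q_1\subseteq Q_2$, so $Q_1=Q_2$. For the inductive step it suffices to produce a polytope $Q_1'$ with $Q_1\le Q_1'\subseteq Q_2$, such that $Q_1'$ still has a facet contained in $\partial Q_2$ and contains strictly more vertices of $Q_2$ than $Q_1$; the inductive hypothesis applied to $Q_1'\subseteq Q_2$ then gives $Q_1'\le Q_2$, whence $Q_1\le Q_2$.

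To build $Q_1'$, pick a vertex $v$ of $Q_2$ with $v\notin Q_1$ and grow $Q_1$ towards $v$. Let $\phi_1,\dots,\phi_m$ be the facets of $Q_1$ visible from $v$, i.e. those with $v\notin\Aff_{\phi_i}(Q_1)^+$; since $v\notin Q_1$ we have $m\ge 1$. If $m=1$, then $\Delta=\conv(\phi_1\cup\{v\})$ is a pyramid over $\phi_1$, $\conv(Q_1\cup\{v\})=Q_1\cup\Delta$ is convex with $\Delta\cap Q_1=\phi_1$ a facet of $\Delta$, so $Q_1\ \D\subset\ \conv(Q_1\cup\{v\})$, and this polytope lies in $Q_2$ because $Q_1,v\in Q_2$; set $Q_1'=\conv(Q_1\cup\{v\})$. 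If $m\ge 2$, a direct stacking is illegal, since $\conv(Q_1\cup\{v\})$ is non-convex across the ridges $\phi_i\cap\phi_j$ with both facets visible; the plan is to use \emph{(\ref{v-assumption})} repeatedly to reduce to the case $m=1$. Namely, for a ridge $e=\phi_i\cap\phi_j\subset\phi_i$ with $\phi_j$ also visible, rotate $\Aff(\phi_i)$ about $\Aff(e)$ towards $v$ (notation \ix) to a suitable new position $\phi_i'$ with $\conv(Q_1\cup\phi_i')\subseteq Q_2$, chosen so as to strictly decrease $m$ or the total reflex angle along such ridges. On the level of the facet alone this replacement is exactly the extension $\phi_i\ \le\ \phi_i\ \underset e\vee\ \phi_i'$ supplied by \emph{(\ref{v-assumption})}; a relative form of that extension (see below) promotes it to $Q_1\ \le\ \conv(Q_1\cup\phi_i')$. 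After finitely many such reductions $v$ sees one facet and we stack the pyramid as above, obtaining $Q_1'$ with $Q_1\le Q_1'\subseteq Q_2$ and $v\in Q_1'$. Since every move keeps all new material in $\{x_d\ge 0\}$ and never deletes the face of the current polytope in $\{x_d=0\}$ (it may only enlarge it inside $G$), $Q_1'$ retains a facet contained in $\partial Q_2$, and the induction closes.

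The main obstacle is the case $m\ge 2$, and inside it two points: first, a \emph{relative} version of \emph{(\ref{v-assumption})} — that a growth of a facet $\phi$ of a polytope $P$ which takes place on the side of $\Aff(\phi)$ away from $P$ can be replayed verbatim with $P$ glued on, yielding a growth of $P$ itself; second, and more delicate, the bookkeeping that guarantees the reduction to $m=1$ terminates: one must choose, along the poset of reflex ridges at $v$, the rotation axes $\Aff(e)$, the angles, and the order of operations so that every intermediate body stays convex and inside $Q_2$, no codimension-two face collides prematurely, and some monovariant (number of visible facets, then accumulated reflex angle) strictly drops. It is exactly here that the hypothesis ``$F$ is a facet of $Q_1$ lying in $\partial Q_2$'' should be essential: it provides the fixed supporting hyperplane $\{x_d=0\}$ against which all the rotations are anchored and which forces finiteness; without such an anchor the same scheme would prove the unconditional statement ``$Q_1\subseteq Q_2\Rightarrow Q_1\le Q_2$'' in every dimension, which is not what Theorems \ref{Theorem_A}--\ref{Theorem_C} assert.
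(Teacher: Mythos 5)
Your inductive variable (number of vertices of $Q_2$ outside $Q_1$) and growth strategy (add one vertex $v$ at a time) differ from the paper's, and the difference is not cosmetic: the paper inducts on the number of facets of $Q_1$ that are \emph{not} contained in $\partial Q_2$. In the paper's inductive step one picks a facet $G$ of $Q_1$ with $G\not\subset\partial Q_2$ adjacent (along a ridge $g$) to a facet $F\subset\partial Q_2$, cuts $Q_2$ by $\Aff(G)$ into $Q_2^-\supset Q_1$ and $Q_2^+$, and recovers $Q_2$ from $Q_2^-$ by rotating $\Aff(G)$ about $\Aff(g)$ through the vertex levels of $Q_2^+$. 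Each slice $Q_2[\theta_i,\theta_{i+1}]$ is literally a polytope of the form $A_i\underset g\vee A_{i+1}$ with $A_i,A_{i+1}$ both $(d-1)$-dimensional, so every one of the finitely many steps $W_i\le W_{i+1}$ is exactly the single move licensed by (\ref{v-assumption}) (relativized along the facet $A_i$). The decrement is immediate: $G\subset\partial Q_2^-$ by construction, and every facet of $Q_1$ that was in $\partial Q_2$ stays in $\partial Q_2^-$, so $n(Q_1,Q_2^-)<n(Q_1,Q_2)$. There is no case distinction on how many facets are visible from a point.

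The genuine gap in your argument is the entire $m\ge 2$ case, and it is not a small omission. You want, by rotating one visible facet $\phi_i$ about a ridge $e=\phi_i\cap\phi_j$, to strictly decrease either the number of facets visible from $v$ or a ``total reflex angle along visible ridges,'' while staying convex, staying inside $Q_2$, and avoiding premature collisions of codimension-two faces. None of this is established. It is not clear that $m$ decreases: after the wedge is stacked, $v$ may see the new facet $\phi_i'$ as well as the lateral facets of the wedge, and the other dihedral angles of $\phi_i$ (at $\phi_i\cap\phi_k$ for $k\neq j$) become more obtuse, so a single rotation can \emph{increase} $m$. The proposed fallback monovariant ``accumulated reflex angle'' is undefined, and you give no reason it is bounded below in a way that forces termination; in fact at a vertex of $Q_1$ lying on several visible ridges the angles at different ridges are not independent, so the bookkeeping you gesture at would have to coordinate a whole star of rotations at once. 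Finally, you invoke a ``relative version of (\ref{v-assumption})'' but never state or prove it (``see below'' points to nothing). The paper also needs a relativization, but for its slicing argument it is the easiest possible one: each chain supplied by (\ref{v-assumption}) starts at the cross-section $A_i$, which remains a facet of every intermediate polytope and of $W_i$, so gluing $W_i$ along $A_i$ is unproblematic. Your relativization is invoked facet-by-facet at a vertex where several visible facets interact, and it is not clear the intermediate bodies of the (\ref{v-assumption})-chain for $\phi_i$ keep $\phi_i$ as a facet of the glued object when $Q_1$ has further visible facets adjacent to $\phi_i$. In short, you have rediscovered, inside the proof of this lemma, essentially the difficulty that the paper's slicing induction is designed to avoid; what you have is a plan with two acknowledged unproved steps rather than a proof.
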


\begin{proof}
We will induct on the number 
$n(Q_1,Q_2):=\#\{G\in \FF(Q_1)\ |\ G\notin\partial(Q_2)\}$.

If $n(Q_1,Q_2)=0$ then $Q_1=Q_2$ and there is nothing to prove. 

Assume $Q_1\subsetneq Q_2$. Then we can find a facet $G\in\FF(Q_1)$ with $G\notin\partial(Q_2)$ and adjacent to a facet $F\in\FF(Q_1)$ with $F\subset\partial(Q_2)$. Let $g=G\cap F$. We have $\dim g=d-2$.

The space $\Aff(G)$ cuts $Q_2$ in two parts. Let $Q_2^-$ the part containing $Q_1$ and $Q_2^+$ be the other part. Let $\theta>0$ be the smallest angle for which there exists a $(d-1)$-dimensional space $H\subset\Aff(Q_2)$, satisfying the conditions:
\begin{enumerate}[\rm$\centerdot$]
\item $\Aff(G)\cap H=\Aff(g)$,
\item $Q_2^+$ is between $H$ and $\Aff(G)$,
\item The angle between $H$ and $\Aff(G)$ equals $\theta$.
\end{enumerate}
(It can happen that $H=\Aff(F)$.)

Pick any element $v$ from the nonempty set $(\vertex(Q_2)\cap H)\setminus\{g\}$ and consider the angles 
$$
0=\theta_0<\theta_1<\ldots<\theta_k=\theta,
$$
for which the hyperplane $\Aff(G)_{\theta_i}(g,Q_2^+,v)$ \ix\
 meets $\vertex(Q_2^+)$. 

For every index $i\in\{0,\ldots,k-1\}$, the piece $Q_2[\theta_i,\theta_{i+1}]$ of $Q_2^+$ between the spaces $\Aff(G)_{\theta_i}(g,Q_2^+,v)$ and $\Aff(G)_{\theta_{i+1}}(g,Q_2^+,v)$ is of the form \viii
$$
Q_2[\theta_i,\theta_{i+1}]=\big(Q_2\cap\Aff(G)_{\theta_i}(g,Q_2^+,v)\big)\underset g\vee \big(Q_2\cap\Aff(G)_{\theta_{i+1}}(g,Q_2^+,v)\big).
$$

\medskip By the assumption that (\ref{v-assumption}) holds for $d$, we have
\begin{align*}
Q_2^-\ \le\ &Q_2^-\cup Q_2[\theta_0,\theta_1]\ \le\ 
\ldots\ \le\ Q_2^-\cup Q_2[\theta_0,\theta_1]\cup\ldots\cup Q_2[\theta_{k-1},\theta_k]\ =\ Q_2.
\end{align*}

On the other hand,  $n(Q_1,Q_2^-)<n(Q_1,Q_2)$. Hence, by the induction assumption, $Q_1\le Q_2^-$.
\end{proof}

The first reduction in the proof of the main results is provided by the following

\begin{lemma}\label{v_reduction}
To prove Theorems \ref{Theorem_A}, \ref{Theorem_B}, and \ref{Theorem_C} in dimension $d\ge2$, it is enough to prove that \emph{(\ref{v-assumption})} is true for $d$.
\end{lemma}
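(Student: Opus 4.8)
The plan is to derive all three theorems in dimension $d$ directly from Lemma \ref{Inscribed}, so that (\ref{v-assumption}) enters only through that lemma. Throughout, $\le$ denotes the relevant one of $\D\le$, $\q\le$, $\Infty\le$; we assume (\ref{v-assumption}) holds for $d$, so that Lemma \ref{Inscribed} is available in dimension $d$. Since $P\le Q$ trivially implies $P\subseteq Q$, it is enough to prove the converse: for every pair of polytopes $P\subseteq Q$ in $\POL(d)$ one has $P\le Q$, together with $\delta(P,Q)=0$ in the quasi-pyramidal case. A pair with $\dim Q<d$ lies in $\POL(d-1)$ and so is covered by the statement one dimension lower; hence we may assume $\dim Q=d$.

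First I reduce to the case $\dim P=d$. If $\dim P<d$, choose $v\in\inte(Q)\setminus\Aff(P)$; this is possible because $\dim\Aff(P)<d$ while $\inte(Q)$ is a $d$-dimensional open set. Then $\conv(P,v)$ is a pyramid with base $P$, so $P\D\subset\conv(P,v)\subseteq Q$ and $\dim\conv(P,v)=\dim P+1$. Iterating, we obtain a chain $P\D\subset P_1\D\subset\cdots\D\subset P^\ast$ of pyramidal extensions inside $Q$ with $\dim P^\ast=d$. As $\D\le$ is contained in each of the three orders, and each pyramidal extension has quasi-pyramidal defect $0$, we may replace $P$ by $P^\ast$.

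So assume $\dim P=\dim Q=d$. Let $v_1,\dots,v_m$ be the vertices of $Q$ not lying in $P$; if $m=0$ then $\vertex(Q)\subseteq P$, so $Q=P$ and we are done. Set $P^{(0)}=P$ and $P^{(j)}=\conv\bigl(P^{(j-1)},v_j\bigr)=\conv(P,v_1,\dots,v_j)$; then $P^{(m)}=Q$. For each $j$ we have $v_j\notin P^{(j-1)}$: being an extreme point of $Q$ that lies in the subpolytope $P^{(j-1)}\subseteq Q$, it would otherwise be a vertex of $P^{(j-1)}$, hence an element of $\vertex(P)\cup\{v_1,\dots,v_{j-1}\}$, contrary to its choice. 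Since $P^{(j-1)}$ is full-dimensional, the point $v_j$ cannot lie strictly beyond all of its facets --- the outer facet normals of a full-dimensional polytope positively span the ambient space --- so some facet $G_j$ of $P^{(j-1)}$ is not visible from $v_j$; such a facet satisfies $G_j\subseteq\partial P^{(j)}$, because in $P^{(j)}=\conv(P^{(j-1)},v_j)$ it is retained (possibly enlarged within $\Aff(G_j)$). Thus $P^{(j-1)}\subseteq P^{(j)}$ are $d$-polytopes and $P^{(j-1)}$ has a facet contained in $\partial P^{(j)}$, so Lemma \ref{Inscribed} yields $P^{(j-1)}\le P^{(j)}$. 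Concatenating, $P=P^{(0)}\le P^{(1)}\le\cdots\le P^{(m)}=Q$, i.e. $P\le Q$.

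For the quasi-pyramidal defect in Theorem \ref{Theorem_C}: the chain just built consists of genuine pyramidal extensions, which contribute $0$, and of applications of Lemma \ref{Inscribed}; inspection of the proof of Lemma \ref{Inscribed} shows that if (\ref{v-assumption}) is established in the quasi-pyramidal setting with zero defect, then the chains produced by that proof have zero defect as well, and the subadditivity of $\delta$ under concatenation gives $\delta(P,Q)=0$. The part of the argument I expect to need the most care is the bookkeeping rather than the geometry: checking that the elementary moves employed --- pyramiding $P$ up to full dimension, and the successive appeals to Lemma \ref{Inscribed} along $P^{(j-1)}\subseteq P^{(j)}$ --- lie in all three orders $\D\le$, $\q\le$, $\Infty\le$ simultaneously, and, in the quasi-pyramidal case, that none of them enlarges the defect. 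The underlying geometric input is nothing beyond the standard beneath--beyond description of $\conv(K,v)$ for $v\notin K$.
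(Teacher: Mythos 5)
Your proof is correct and follows the paper's argument essentially step for step: reduce to the full-dimensional case by pyramiding $P$ up inside $Q$, then add the missing vertices $v_1,\dots,v_m$ of $Q$ one at a time, invoking Lemma~\ref{Inscribed} for each consecutive pair $P^{(j-1)}\subset P^{(j)}$, with the zero-defect claim following from the finiteness of the number of elementary steps. The only difference is that you spell out the verification (via a facet of $P^{(j-1)}$ not visible from $v_j$) that the hypothesis of Lemma~\ref{Inscribed} holds at each step, which the paper simply asserts.
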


\begin{proof}
Let $Q_1\subset Q_2$ be polytopes. By iteratively taking pyramids over $Q_1$ inside $Q_2$, we can without loss of generality assume $\dim Q_1=\dim Q_2$. Let $\{v_1,\ldots,v_n\}=\vertex(Q_2)\setminus Q_1$. Any two consecutive members in the series of inclusions
\begin{align*}
Q_1\subset\conv(Q_1,v_1)\subset\conv(Q_1,v_1,v_2)\subset\ldots\subset\conv(Q_1,v_1,\ldots,v_n)=Q_2,
\end{align*}
satisfy the condition in Lemma \ref{Inscribed}. This proves the part of Lemma \ref{v_reduction}, concerning inequalities. As for the infinitesimal nature of the pyramidal defect in Theorem \ref{Theorem_C}, it also reduces to the extensions of the form $P\subset P\underset f\vee R$ as in (\ref{v-assumption}) because the number of such extensions, involved in the proof of Lemma \ref{Inscribed}, is finite.
\end{proof}

\begin{corollary}\label{polygons}
For $d\le2$, the pyramidal growth in $\POL(d)$ is the same as the inclusion order.
\end{corollary}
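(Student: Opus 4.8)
The plan is to reduce everything to Lemma~\ref{v_reduction}, handling the degenerate dimensions by hand; since in this range $\le$ denotes any of $\D\le$, $\q\le$, $\Infty\le$, it is enough to treat the coarsest one, $\D\le$. For $d=0$ there are no pyramidal extensions at all, a proper pyramid having dimension $\ge1$, so $\D\le$ is the discrete order, which also coincides with inclusion on $0$-polytopes because $P\subset Q$ forces $P=Q$. For $d=1$ I would argue directly: a pyramid in $\POL(1)$ is either a segment (a pyramid over a point) or is of dimension $\le1$ and hence cannot be a proper pyramid, and one checks that, up to relabeling endpoints, the pyramidal extensions in $\POL(1)$ are exactly $\{a\}\D\subset[a,w]$ and $[a,b]\D\subset[a,c]$ with $b$ strictly between $a$ and $c$. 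Given $P\subset Q$ in $\POL(1)$, I would first enlarge $P$ to a segment by a pyramid if $P$ is a point, and then realize $[a,b]=P\subset Q=[c,d]$ as $[a,b]\D\subset[c,b]\D\subset[c,d]$ (with obvious simplifications when $a=c$, $b=d$, or $P=Q$); hence $P\D\le Q$, and $\D\le$ is the inclusion order on $\POL(1)$.

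For $d=2$ I would invoke Lemma~\ref{v_reduction}: it suffices to verify assumption (\ref{v-assumption}) for $d=2$. There $P$ and $R$ are $1$-polytopes (segments) with common facet $f=P\cap R$ (a vertex), and the very use of the symbol $\underset f\vee$ presupposes $\dim\conv(P\cup R)=\dim P+\dim R-\dim f=2$. Writing $R=[f,w]$ we get $\conv(P\cup R)=\conv(P,w)$, and $\dim\conv(P,w)=2>1=\dim P$ forces $w\notin\Aff(P)$; hence $P\underset f\vee R=\conv(P,w)$ is, by definition, a pyramid over $P$, so $P\D\subset P\underset f\vee R$ and in particular $P\D\le P\underset f\vee R$. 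Thus (\ref{v-assumption}) holds for $d=2$, and Lemma~\ref{v_reduction} yields that $\D\le$ is the inclusion order on $\POL(2)$.

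There is no real obstacle here, since the substance has been packaged into Lemmas~\ref{Inscribed} and~\ref{v_reduction}. The only things to notice are the two degenerate facts driving the argument: that a pyramid over a point is a segment, which makes points comparable to the segments containing them in dimensions $0$ and $1$; and that a $\underset f\vee$-glueing of two segments in the plane is already a pyramid over either of them, which makes (\ref{v-assumption}) automatic in dimension $2$.
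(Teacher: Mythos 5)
Your proof is correct and follows the same route as the paper's: for $d=2$ you verify assumption (\ref{v-assumption}) by observing that when $P$ and $R$ are segments sharing a vertex $f$, the polygon $P\underset f\vee R$ is a triangle, hence a pyramid over $P$, and then invoke Lemma~\ref{v_reduction}. Your explicit handling of $d=0$ and $d=1$ merely fleshes out what the paper dismisses as obvious.
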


\begin{proof}
The claim is obvious for $d=1$. 

When $\dim P=\dim Q=1$ and $f=P\cap Q$ is a common vertex, the polygon $P\underset f\vee Q$ is a triangle. In particular, $P\D\le P\underset f\vee Q$ and Lemma \ref{v_reduction} applies.
\end{proof}

\section{$\vee$-growth}\label{pv_growth} 

As in Section \ref{v_polytopes}, we let $\le$ denote any of the inequalities $\D\le$, $\q\le$, and $\Infty\le$.

Throughout this section, we assume $d\ge 2$ and that $P$ and $Q$ are polytopes, such that $\dim P=\dim Q=d-1$ with $f=P\cap Q$ is a common facet. 

We also fix a point $v\in\Aff_f(Q)^+\setminus Q$ \v\ and a projective transformation $\Phi$ of $\Aff(P,Q)$, moving $\Aff(f)$ to infinity, but not moving any of the points of $(P\underset f\vee Q)\setminus f$ to infinity.

Below, when we write $\partial_v(-)^+$ and $\FF_v(-)^+$ \vii, the visibility is understood with respect to the polytope $P\underset F\vee Q$.

\begin{proposition}\label{Main_sequence}
Assume $\le$ coincides with the inclusion order on $\POL(d-1)$. Then, for any number $\lambda<1$, sufficiently close to $1$, there exists an infinite sequence of polytopes
$Q=Q_0\subset Q_1\subset Q_2\subset\ldots\subset\conv(Q,v)$, such that:
\begin{enumerate}[\rm(a)]
\item For every index $i\ge1$, the sets $\FF_v(P)^+$ and $\FF_v(Q_i)^+$ are in bijective correspondence so that $\conv(D,E)\in\FF_v\big(P\underset f\vee Q_i\big)^+$  whenever $D\in\FF_v(P)^+$ and $E\in\FF_v(Q_i)^+$ correspond to each other;
\item For every index $i\ge1$, the set $\Phi(Q_i\setminus f)$ is the homothetic image of $\Phi(Q_{i-1}\setminus f)$ centered at $\Phi(v)$ with coefficient $\lambda$;
\item $P\underset f\vee Q_0\ \le P\underset f\vee Q_1\ \le\  P\underset f\vee Q_2\ \le\ \ldots$\ .
\end{enumerate}
\end{proposition}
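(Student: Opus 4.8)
The plan is to move the whole configuration into the projective chart supplied by $\Phi$ — in which $f$ sits at infinity and $P\underset f\vee Q$ becomes an unbounded $d$-polyhedron — build the $Q_i$ there as a self-similar nested family, and then deduce (c) from the hypothesis ``$\le$ is the inclusion order on $\POL(d-1)$'' by lifting elementary $(d-1)$-dimensional extensions to the $\vee$-polytope. For a polytope $X\supset f$ write $X^\flat:=\overline{\Phi(X\setminus f)}$: since $\Phi$ carries $\Aff(f)$ to infinity while keeping $(P\underset f\vee Q)\setminus f$ finite, $P^\flat$ and $Q^\flat$ are unbounded $(d-1)$-polyhedra with a common recession cone (the one coming from $f$), and $\conv(Q,v)^\flat\supset Q^\flat$ is an unbounded $(d-1)$-polyhedron having $\Phi(v)$ as a vertex.

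First I would construct the sequence and dispose of (a) and (b). Let $h$ be the homothety of $\Aff(P,Q)$ with centre $\Phi(v)$ and ratio $\lambda$ and set $Q_i^\flat:=h^{\circ i}(Q^\flat)$. Homotheties fix recession cones, so all $Q_i^\flat$ and $\conv(Q,v)^\flat$ carry the recession cone of $f$; comparing the facet half-spaces then shows that for all sufficiently small $\lambda>0$ one has $Q^\flat=Q_0^\flat\subset Q_1^\flat\subset\cdots\subset\conv(Q,v)^\flat$, equivalently $Q=Q_0\subset Q_1\subset\cdots\subset\conv(Q,v)$, and (b) holds by the very definition of the $Q_i$. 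For (a): the $Q_i^\flat$ ($i\ge1$) are homothetic copies of $Q^\flat$, so the $Q_i$ are pairwise combinatorially identical and $P\underset f\vee Q_i$ has one and the same combinatorial type for all $i\ge1$, with facets $P$, $Q_i$, and joins $\conv(D,E)$ of faces $D\subset P$, $E\subset Q_i$ meeting along a face of $f$; and because $h$ is centred at $\Phi(v)$ it preserves the relation ``visible from $\Phi(v)$'', so it matches the $v$-visible facets of $P\underset f\vee Q$ with those of $P\underset f\vee Q_i$, and sending a $v$-visible join $\conv(D,E)$ to the pair $(D,E)$ gives the bijection $\FF_v(P)^+\leftrightarrow\FF_v(Q_i)^+$ required in (a).

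The substance of the proposition is (c), and here the dimension-$(d-1)$ hypothesis enters. Fix $i\ge1$. Inside the $(d-1)$-space $\Aff(Q)$ we have $Q_{i-1}\subset Q_i$, hence $Q_{i-1}\le Q_i$ there; unfold this into a chain of the appropriate kind — a chain of pyramidal extensions for $\D\le$, a quasi-pyramidal chain for $\q\le$, a transfinite ascending sequence for $\Infty\le$ — whose elementary steps $R'\D\subset R$ are all of the ``stack a pyramid onto a facet'' type, since every polytope occurring contains $Q_{i-1}$ and so is $(d-1)$-dimensional and lies in $\Aff(Q)$. The key point will be a lifting lemma: \emph{if $R'\D\subset R$ inside $\Aff(Q)$ is obtained by stacking, with apex $w$, a pyramid onto a facet $E$ of $R'$, then $P\underset f\vee R'\D\subset P\underset f\vee R$, obtained by stacking, with the same apex $w$, a pyramid onto the facet $\conv(D,E)$ of $P\underset f\vee R'$ that meets $R'$ along $E$.} To prove it, observe that every $R'$ in such a chain contains $Q\supset f$ and lies in $\conv(Q,v)$, so $f$ is a facet of $R'$; as $w$ is beyond exactly the facet $E$ of $R'$ it is not beyond $f$, so $w$ lies on the $R'$-side of $\Aff(f)$ in $\Aff(Q)$, and since $\Aff(P)\cap\Aff(Q)=\Aff(f)$ this puts $w$ on the $(P\underset f\vee R')$-side of $\Aff(P)$; hence $w$ sees neither $P$ nor $R'$, and among the remaining facets $\conv(D'',E'')$ of $P\underset f\vee R'$ those visible from $w$ are precisely the ones with $w$ beyond $E''$ in $\Aff(Q)$ (the affine hull of $\conv(D'',E'')$ meeting $\Aff(Q)$ in $\Aff(E'')$), i.e. only $\conv(D,E)$. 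Thus $\overline{(P\underset f\vee R)\setminus(P\underset f\vee R')}=\conv\big(\conv(D,E),w\big)$ is a pyramid with base $\conv(D,E)$, as claimed. Applying the operation $X\mapsto P\underset f\vee X$ to the chain term by term now finishes: it is monotone, commutes with Hausdorff limits, and is $1$-Lipschitz for $\d$, so it sends pyramidal chains to pyramidal chains, transfinite sequences to transfinite sequences, and quasi-pyramidal chains to quasi-pyramidal chains with $\sum\d(P\underset f\vee R',P\underset f\vee R)\le\sum\d(R',R)$; in each case one gets $P\underset f\vee Q_{i-1}\le P\underset f\vee Q_i$, which is (c), and in the $\q\le$ case the vanishing quasi-pyramidal defect is carried along too.

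The main obstacle I anticipate is executing the first step so that all of its demands hold simultaneously: one must fix $\lambda$ — and, behind the scenes, a good position of $v$ with respect to $Q$ and of the hyperplane that $\Phi$ sends to infinity — so that the homothetic chain genuinely increases, stays inside $\conv(Q,v)$, and keeps $P\underset f\vee Q_i$ combinatorially constant with $v$ in a fixed visibility position; that last requirement is precisely what forces $\lambda$ small, since it is what ensures that, in the lifting above, stacking over a $v$-visible facet $\conv(D,E)$ never wraps around to a neighbouring facet. Within that lifting, the corresponding delicate point is the visibility accounting in the lemma — confirming that the lifted apex sees exactly one facet — which is the only spot where the geometry of $P$ actually matters; once it is in place, the limit and defect statements for $\Infty\le$ and $\q\le$ are formal consequences of monotonicity, continuity, and the $1$-Lipschitz bound for the join $\underset f\vee$.
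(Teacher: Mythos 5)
The central step of your argument --- the ``lifting lemma'' --- is false, and its failure is precisely the difficulty that the paper's $\R$- and $\SS$-constructions exist to handle. The facets of $P\underset f\vee R'$ other than $P$ and $R'$ are not all of the form $\conv(D'',E'')$ with $E''$ a facet of $R'$: there are also facets in which $D''$ is a \emph{facet of $P$} and $E''$ is a lower-dimensional face of $R'$ (for $d=3$ these are the triangles spanned by an edge of $P$ and a vertex of $R'$). For such a facet, $\Aff(\conv(D'',E''))\cap\Aff(Q)$ is not $\Aff(E'')$ but a supporting hyperplane of $R'$ meeting it only along $E''$, and an apex $w$ that is beyond exactly one facet $E$ of $R'$ can perfectly well lie beyond several such supporting hyperplanes. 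Concretely, whenever $\FF_v(P)^+\ne\emptyset$ (the generic case, and the one the proposition is really about), any apex sufficiently close to $v$ sees facets of $P$ from outside $P\underset f\vee R'$, so $\overline{(P\underset f\vee R)\setminus(P\underset f\vee R')}$ is not a pyramid; and the chain from $Q_{i-1}$ to $Q_i$ must add apexes arbitrarily close to $v$. This is why the paper lifts elementary extensions only when the apex lies in the ``free zone'' $\R(Q,P)$, the largest intermediate polytope none of whose points sees a facet of $P$ (Lemmas \ref{Free_zone} and \ref{Q_to_R}), and has to grow past $\R(Q,P)$ by a different mechanism, the $\SS$-construction, which rotates the affine hulls of the visible facets of $P\underset f\vee S$ about the ridges $\Aff(\rho(s))$ instead of stacking over them.

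Your construction of the sequence itself is also not viable. First, with $Q^\flat=\overline{\Phi(Q\setminus f)}$ and $h$ the homothety of ratio $\lambda<1$ centred at $\Phi(v)$, the containment $Q^\flat\subset h(Q^\flat)$ forces $Q^\flat-\Phi(v)$ to lie inside its own recession cone, i.e.\ $Q\subset\conv(f,v)$, which fails for general $Q$; making $\lambda$ smaller only shrinks $h(Q^\flat)$ further toward $\Phi(\conv(f,v)\setminus f)$. Second, and independently, homothetic images of $Q$ have the visible-facet structure of $Q$, not of $P$, so condition (a) --- the bijection $\FF_v(P)^+\leftrightarrow\FF_v(Q_i)^+$ --- cannot hold for them unless it already holds for $Q$ itself. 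In the paper $Q_1$ is \emph{not} a homothetic image of $Q_0$: it is produced by Lemma \ref{Q1} through repeated $\SS$-steps exactly so as to make the map $\rho$ a bijection, and only from $Q_1$ onward is the sequence self-similar under $\Phi^{-1}\circ\Psi_\lambda\circ\Phi$; the single inequality $P\underset f\vee Q_1\le P\underset f\vee Q_2$ is proved by Lemma \ref{S_Theta} and then propagated by pushing the chain forward along this projective transformation. So while reducing (c) to the $(d-1)$-dimensional hypothesis and lifting elementary steps is the right starting point, the proposal is missing the two ideas that make the proposition true: the restriction of the lift to the region where no facet of $P$ is visible, and the rotation/self-similarity mechanism for getting beyond that region.
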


The proof requires a preparation.

%\begin{enumerate}[\rm$\centerdot$]
%{\color{red}\item For a point $z\in\conv(Q,v)\setminus(Q\cup\{v\})$ and a facet $D\in\FF(P)$, by $\Aff(D,z)^-\subset\Aff(Q)$ we denote the affine half-space with the boundary $\Aff(D,z)\cap\Aff(Q)$ and containing $\inte(Q)$, (equivalently, containing $F$). (FIGURE)%\todo{general notation?}
%\item For an $(d-2)$-dimensional affine subspace%\todo{space \& subspace always mean affine} $s\subset\Aff(Q)$, separating $v$ and $\inte(Q)$, by $\Aff(s)^-\subset\Aff(Q)$ we denote the affine half-space, containing $\inte(Q)$.}

%{\color{blue}Observe that, for evere intermediate polytope $Q\subset Q'\subset\conv(Q,v)$, we have
%\begin{align*}
%&\partial_v(Q')^+=Q'\setminus\inte(f),\\
%&\FF_v(Q')^+=\FF(Q')\setminus\{f\}.\\
%\end{align*}}\end{enumerate}

\subsection{$\R$- and ${\SS}$-constructions}\label{R_S_constructions} We will need two auxiliary polytopal constructions.

Consider the polytope
\begin{align*}
\R(Q,P)=\conv(Q,v)\quad\bigcap\quad\bigcap_{
\tiny{\begin{aligned}
&w\in\vertex(Q)\cap\partial_v(Q)^+\\
&D\in\FF_v(P)^+\\
&\conv(D,w)\subset\partial_v\big(P\underset f\vee Q\big)^+\\
\end{aligned}}
}\Aff(D,w)_v^-.\\
&\hfill \vii\ \vi
\end{align*}
This polytope is determined by the following properties:
\begin{enumerate}[\rm$\centerdot$]
\item $Q\subset\R(Q,P)\subset\conv(Q,v)$;
\item For every point $z\in\R(Q,P)\setminus Q$, every element of $\FF_z(P\underset f\vee Q)^+$ is the convex hull of an element of $\FF_v(Q)^+$ and a face of $P$ inside $\partial_v(P)^+$ \vii, which is not a facet of $P$, i.e., the point $z$ can not see a facet of $P$;
\item $\R(Q,P)$ is the largest polytope with these properties.
\end{enumerate}

\medskip Let an intermediate polytope $Q\subset S\subset\conv(Q,v)$ satisfy the condition:
\medskip\begin{enumerate}[($\bigstar$)]
\item For every element $s\in\FF_v(S)^+$ there is an element $p\in\FF_v(P)^+$ such that $\conv(p,s)\in\FF_v(P\underset f\vee S)^+$ \vii.
\end{enumerate}

\medskip\noindent In this situation, we have the injective map
\begin{align*}
\rho:\FF_v(S)^+&\to\FF_v(P)^+,\\
s&\mapsto p.\\
\end{align*} 
Furthermore, for every element $p\in\FF_v(P)^+$, there is a unique element of $\FF_v(P\underset f\vee S)^+$, containing $p$. This assignment gives rise to a bijective  map
$$
\sigma:\FF_v(P)^+\to\FF_v(P\underset f\vee S)^+
$$
and the correspondence
$$
s\mapsto\big(\sigma(\rho(s)\big)\cap\Aff(Q)
$$
is the identity map of $\FF_v(S)^+$.

For an intermediate polytope $Q\subset S\subset\conv(Q,v)$, satisfying $(\bigstar)$, and a system of positive real numbers
$$
\Theta=\{\theta_s>0\ |\ s\in\FF_v(S)^+\},
$$
we introduce the following polytope:

\begin{align*}
{\SS}_\Theta(S,P)=\conv(Q,v)\quad\bigcap\quad\bigcap_{s\in\FF_v(S)^+}&\Aff\big((\sigma(\rho(q))\big)_{\theta_s}(\rho(s),S,v)^-\quad\bigcap\\
\bigcap_{p\in\FF_v(P)^+\setminus\Im(\rho)}&\Aff_v(\sigma(p))^-\\
&&\hfill \vi\ \ix
\end{align*}

In other words, the polytope $P\underset f\vee{\SS}_\Theta(S,P)$ is obtained from $P\underset f\vee S$ by rotating the affine hulls of the facets in $\FF_v(P\underset f\vee S)^+$, which contain the elements of $\FF_v(S)^+$ as subsets, towards $v$ by the angles $\theta_s$ about $\Aff(\rho(s))$, respectively, and not moving the affine hulls of the other facets in $\FF_v(P\underset f\vee S)^+$.

The following lemma is straightforward:

\begin{lemma}\label{R_good}
$\R(Q,P)$ satisfies $(\bigstar)$. If an intermediate polytope $Q\subset S\subset\conv(Q,v)$ satisfies $(\bigstar)$ then the polytope ${\SS}_\Theta(S,P)$ also satisfies $(\bigstar)$, where $\Theta=\{\theta_s\ |\ s\in\FF_v(S)^+\}$ and the $\theta_s>0$ are sufficiently small.
\end{lemma}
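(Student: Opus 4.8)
The plan is to read both assertions off the defining properties of $\R(Q,P)$ and off the description of $P\underset f\vee\SS_\Theta(S,P)$ as a small outward rotation of $P\underset f\vee S$. I will use two standing facts, both immediate from $v\in\Aff_f(Q)^+$: the common facet $f$ belongs to neither $\FF_v(P)^+$ nor $\FF_v(Q)^+$; and if $g\subset\Aff(P)$, $h\subset\Aff(Q)$ are $(d-2)$-dimensional faces with $g\ne f$, then $\Aff(g)\ne\Aff(h)$ (otherwise this common $(d-2)$-flat would lie in $\Aff(P)\cap\Aff(Q)=\Aff(f)$, forcing $g=f$), so that $\conv(g,h)$ is $(d-1)$-dimensional.

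\emph{Part 1.} First I would note $v\notin\R(Q,P)$ (true unless $\FF_v(P)^+=\varnothing$, in which case there is nothing to check), so that $\FF_v(\R(Q,P))^+$ is defined. Then I classify the facets of $\R:=\R(Q,P)$: each lies on $\partial\conv(Q,v)$ or on one of the hyperplanes $\Aff(D,w)$ appearing in the intersection that defines $\R$. A facet of the first kind is never visible from $v$: it is either a facet of $Q$ invisible from $v$ (enlarging $Q$ to $\R$ keeps it invisible), or it sits in the affine hull of a facet of $\conv(Q,v)$ containing $v$, hence in a hyperplane through $v$, so it is edge-on, not strictly visible. Thus every $s\in\FF_v(\R)^+$ equals $\R\cap\Aff(D,w)$ for some admissible pair $(w,D)$ with $D\in\FF_v(P)^+$, and I set $p:=D$. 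Since $\conv(D,w)\subset\partial_v(P\underset f\vee Q)^+$, the hyperplane $\Aff(D,w)$ supports $P\underset f\vee Q$ with $v$ strictly outside; as $\R\subset\Aff_v(D,w)^-$ as well, $\Aff(D,w)$ supports $P\underset f\vee\R$. From $w\notin\Aff(P)$ one gets $P\cap\Aff(D,w)=D$, so the face of $P\underset f\vee\R$ exposed by $\Aff(D,w)$ is $\conv((P\cap\Aff(D,w))\cup(\R\cap\Aff(D,w)))=\conv(D,s)$, which by the second standing fact is $(d-1)$-dimensional, hence a facet of $P\underset f\vee\R$; and it is visible from $v$, since the segment joining it to $v$ stays out of the half-space $\Aff_v(D,w)^-\supset P\underset f\vee Q$. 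This is exactly $(\bigstar)$ for $\R(Q,P)$.

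\emph{Part 2.} I would first check that $\Theta=0$ returns $S$. Writing $S=\conv(Q,v)\cap\bigcap_{s\in\FF_v(S)^+}\Aff_s(S)^+$ — valid because, for $Q\subset S\subset\conv(Q,v)$, the facets of $S$ invisible from $v$ are exactly those of $Q$ — and using $\sigma(\rho(s))\cap\Aff(Q)=s$, one finds $\Aff_s(S)^+=\Aff_v(\sigma(\rho(s)))^-\cap\Aff(Q)$, while for $p\in\FF_v(P)^+\setminus\Im(\rho)$ the hyperplane $\Aff(\sigma(p))$ meets $S$ only in a face of dimension below $d-2$, so $\Aff_v(\sigma(p))^-$ is a redundant constraint. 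Hence $\SS_\Theta(S,P)$ is a small perturbation of $S$: it rotates each supporting hyperplane $\Aff(\sigma(\rho(s)))$ about $\Aff(\rho(s))$ toward $v$ by the angle $\theta_s$; call the result $H_s$. For $\theta_s$ small this keeps $v$ strictly outside $H_s$, keeps $P$ inside $H_s^-$ (because $P$ meets $\Aff(\sigma(\rho(s)))$ only along the rotation axis $\Aff(\rho(s))\subset\Aff(P)$), leaves the redundant constraints redundant, and — being a small perturbation — preserves the combinatorial type of the part of $\partial\SS_\Theta(S,P)$ visible from $v$. Therefore $\FF_v(\SS_\Theta(S,P))^+$ is in bijection with $\FF_v(S)^+$ via $s\mapsto s':=\SS_\Theta(S,P)\cap H_s$, and because $\Aff(\rho(s))\subset H_s$ the computation of Part 1 gives $(P\underset f\vee\SS_\Theta(S,P))\cap H_s=\conv(\rho(s),s')$, a $(d-1)$-dimensional facet visible from $v$. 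Taking $q:=\rho(s)$ establishes $(\bigstar)$ for $\SS_\Theta(S,P)$, with transported map $s'\mapsto\rho(s)$.

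\emph{Main obstacle.} Once the visibility bookkeeping is arranged, both parts are soft; the single quantitative point is the ``sufficiently small'' clause in Part 2, where the $\theta_s$ must be chosen so that (i) no rotated hyperplane $H_s$ crosses $v$ or $P$, (ii) no previously redundant constraint becomes active, and (iii) no facet visible from $v$ collapses and no new one is created. Each condition is open and holds for all small enough $\theta_s$, so a common positive threshold exists; making the choice uniform over the finitely many $s\in\FF_v(S)^+$ is the only step that needs a moment's care, and it is precisely what the hypothesis ``the $\theta_s>0$ are sufficiently small'' encodes.
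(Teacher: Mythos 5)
The paper offers no proof of this lemma: it is stated with the single sentence ``The following lemma is straightforward,'' so there is nothing in the source to compare line by line. Your write-up is exactly the kind of verification the author is deferring, and it goes through; the route is the natural one and I would expect it to coincide with what the author has in mind.

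A few small remarks on the details. In Part~1, the step ``the hyperplane $\Aff(D,w)$ supports $P\underset f\vee Q$'' deserves one more word: since $D$ is $(d-2)$-dimensional and $w\notin\Aff(P)\supset\Aff(D)$, the set $\conv(D,w)$ is a $(d-1)$-dimensional convex piece of $\partial(P\underset f\vee Q)$, hence lies in a single facet of $P\underset f\vee Q$, whose affine hull is then $\Aff(D,w)$; you implicitly use this but it is worth making explicit, because it is what turns the ``visibility'' hypothesis into the ``supporting hyperplane'' conclusion. Your computation $P\cap\Aff(D,w)=D$ and the colimit identity for a supporting hyperplane of a join are fine. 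In Part~2, the statement that for $Q\subset S\subset\conv(Q,v)$ the facets of $S$ invisible from $v$ are facets of $Q$ is true (a ray from $v$ exits $S$, $Q$, and $\conv(Q,v)$ at the same point of $\partial Q$, which pins the invisible part of $\partial S$ to $\partial Q$, and a dimension count then promotes this from boundary points to facets), but that argument should at least be gestured at since the claim is what licenses writing $S$ as the displayed intersection; ``exactly those of $Q$'' should really be ``a subset of the invisible facets of $Q$,'' which is what you actually need for redundancy of the discarded half-spaces. Finally, the ``sufficiently small $\theta_s$'' discussion correctly isolates the only quantitative point, and the three open conditions you list are the right ones; a common positive threshold exists because $\FF_v(S)^+$ is finite. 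None of these are gaps, just places where a reader would want a sentence more, consistent with the author's decision to omit the argument entirely.
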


\subsection{$\R$- and ${\SS}$-growths} We need

\begin{lemma}\label{Free_zone}
For an intermediate polytope $Q\subset Q'\subset\R(P,Q)$ and a point $z\in\R(P,Q)\setminus Q'$ with $\FF_z(Q')^+=\{q'\}$, the set $\FF_z(P\underset f\vee Q')^+$ has one element. This facet of $P\underset f\vee Q'$ is the convex hull of $q'$ and a face of $P$ inside $\partial_v(P)^+$, which is not a facet of $P$. Furthermore, $P\underset f\vee Q'\le P\underset f\vee\conv(Q',z)$.
\end{lemma}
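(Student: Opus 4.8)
Set $R=P\underset f\vee Q'$. The plan is to read off $\FF_z(R)^+$ from the facet structure of $R$, using the three displayed properties of $\R(P,Q)$ and the standing conventions on $v$-visibility, and then to recognise the resulting extension as pyramidal. First I would record the easy structural facts. Since $\Aff(P)$ and $\Aff(Q')=\Aff(Q)$ both support $R$, the sets $P$ and $Q'$ are facets of $R$, $\vertex(R)=\vertex(P)\cup\vertex(Q')$, and every remaining facet $g$ of $R$ is \emph{mixed}: $g=\conv(g\cap P,\,g\cap Q')$ with $g\cap P\subsetneq P$ and $g\cap Q'\subsetneq Q'$. Because $z\in\R(P,Q)\setminus Q'\subset\conv(Q,v)\setminus Q'\subset\Aff(Q)\setminus Q'$, the point $z$ is coplanar with the facet $Q'$ of $R$ and lies in the closed half-space $\Aff_f(Q')^+$, hence on the $R$-side of $\Aff(P)$; so neither $P$ nor $Q'$ is strictly visible from $z$, and every element of $\FF_z(R)^+$ is a mixed facet. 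I would then upgrade this using $\R(P,Q)$: as $P\underset f\vee Q\subseteq R$, visibility past $R$ is more obstructed than past $P\underset f\vee Q$, so a facet of $P$ visible from $z$ past $R$ would be visible from $z$ past $P\underset f\vee Q$, contradicting the property that from $z\in\R(P,Q)\setminus Q$ no facet of $P$ can be seen; hence for every $g\in\FF_z(R)^+$ the face $g\cap P$ is a proper \emph{non-}facet face of $P$, and in particular $g$ shares no ridge of $R$ with $P$.

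The next step locates the $Q'$-traces and isolates the candidate. For $g\in\FF_z(R)^+$, $z$ lies strictly beyond $\Aff(g)$, hence strictly beyond the hyperplane $L_g=\Aff(g)\cap\Aff(Q)$ of $\Aff(Q)$, which supports $Q'$ along the face $g\cap Q'$; writing an outer normal of $L_g$ as a nonnegative combination of the outer facet-normals of the facets of $Q'$ containing $g\cap Q'$ and evaluating at $z$ shows that $z$ sees one of those facets, so (as $\FF_z(Q')^+=\{q'\}$) that facet is $q'$ and $g\cap Q'\subseteq q'$. The ridge $q'$ of $R$ lies in exactly two facets of $R$, one of which is $Q'$; let $g_0$ be the other, so that $g_0\supseteq q'$ and $g_0=\conv(q',D')$ with $D':=g_0\cap P$ a proper non-facet face of $P$. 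The lemma now reduces to the single claim $\FF_z(R)^+=\{g_0\}$, and I expect this uniqueness to be the only real obstacle: what must be excluded is a mixed facet $g\in\FF_z(R)^+$ with $g\cap Q'\subsetneq q'$, i.e.\ one sharing no ridge of $R$ with $Q'$. Here the precise definition of $\R(P,Q)$, not merely its three summary properties, has to be invoked: $\R(P,Q)$ is $\conv(Q,v)$ cut by the half-spaces $\Aff_v(D,w)^-$ over the facets $D$ of $P$ and vertices $w$ of $Q$ visible from $v$ with $\conv(D,w)\subseteq\partial_v(P\underset f\vee Q)^+$, and lying below all of these should confine the part of $\partial R$ visible from $z$ to the facet overhanging $Q'$ across the ridge $q'$, leaving no room for a second mixed facet to fold back toward $z$. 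Turning this picture into a proof — that $z$ is never strictly beyond $\Aff(g)$ for such a $g$ — would be done in the same spirit as the verification of the defining property of $\R(P,Q)$, combining the first-order constraints cut out by the hyperplanes $\Aff(D,w)$ with the fact that inside $\Aff(Q)$ the point $z$ sees only the facet $q'$ of $Q'$; everything else is routine.

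Granting $\FF_z(R)^+=\{g_0\}$, two points finish the proof. For $D'\subseteq\partial_v(P)^+$: since $z\in\conv(Q',v)=\conv(Q,v)$ and $z$ is strictly beyond the supporting hyperplane $\Aff(g_0)$ of $R$, so is $v$ (a convex combination of points of $Q'\subset R$ and of $v$ can be strictly beyond a supporting hyperplane of $R$ only when $v$ is), whence $\inte(g_0)$, and so $\inte(D')$, is visible from $v$ past $R$ and a fortiori past $P\underset f\vee Q$. For the ``furthermore'': $\conv\bigl((P\underset f\vee Q')\cup\{z\}\bigr)=\conv(P\cup Q'\cup\{z\})=P\underset f\vee\conv(Q',z)$, and because $z$ is exterior to $R$ and sees exactly $g_0$, the closure of $\bigl(P\underset f\vee\conv(Q',z)\bigr)\setminus R$ is the pyramid $\conv(g_0,z)$ with apex $z$ and base the facet $g_0$; thus $R\D\subset P\underset f\vee\conv(Q',z)$, and since each of $\D\le,\q\le,\Infty\le$ contains $\D\le$, we conclude $P\underset f\vee Q'\le P\underset f\vee\conv(Q',z)$.
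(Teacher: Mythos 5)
Your proof follows the same route as the paper's: establish that $z$ sees no facet of $P$ past $R=P\underset f\vee Q'$, deduce that the visible part of $\partial R$ is a single mixed facet $\conv(p',q')$, and recognise the stack $\conv(g_0,z)$ as a pyramid. You also add several substantive steps the paper only implies: the normal-cone argument showing that every $g\in\FF_z(R)^+$ has $g\cap Q'\subseteq q'$; the identification of the candidate facet $g_0$ as the second facet of $R$ at the ridge $q'$; the convex-combination argument that $v$, not only $z$, is strictly beyond $\Aff(g_0)$, giving $p'\subseteq\partial_v(P)^+$; and the correct bookkeeping at the end (including that $\conv(Q',z)$ still has $f$ as a facet and that $\D\subset$ implies each of the three orders). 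All of these are right and genuinely improve on the paper's two-sentence argument.

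The one place where your account is incomplete is the place you yourself flag: the uniqueness $\FF_z(R)^+=\{g_0\}$, i.e.\ excluding a second visible mixed facet whose $Q'$-trace is a proper face of $q'$. You defer this to ``the same spirit as the verification of the defining property of $\R$'' and call the rest routine, but that step is the whole content of the lemma. Be aware, though, that the paper treats it with the same brevity — ``This implies $\FF_z(P\underset f\vee Q')^+=\{\conv(p',q')\}$'' is asserted after the remark that $z$ cannot see a facet of $P$, without spelling out why the remaining candidates (facets whose $P$-trace is a non-facet face and whose $Q'$-trace is a proper face of $q'$) cannot be visible. Your suspicion that the \emph{full} defining property of $\R(Q,P)$ — not merely ``$z$ cannot see a facet of $P$'' but that every $z$-visible facet of $P\underset f\vee Q$ has $Q$-part a facet in $\FF_v(Q)^+$ — is what must be invoked is correct, and that passing this from $P\underset f\vee Q$ to $P\underset f\vee Q'$ is where the real work lies. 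So your proposal is an honest and more detailed version of the paper's argument, with a candidly marked gap that coincides with the paper's own terseness rather than with a flaw in your plan.
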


\begin{proof}
As remarked in the definition of the R-construction, $z$ can not see a facet of $P$ from outside of $P\underset f\vee Q$, and a fortiori  from outside of $P\underset f\vee Q'$. This implies
\begin{align*}
\FF_z(P\underset f\vee Q')^+=\{\conv(p',q')\}
\end{align*}
for some face $p'\subset P$, not in $\FF(P)$. As a result, the closure of the set
$$
\big(P\underset f\vee \conv(Q',z)\big)\ \setminus\ \big(P\underset f\vee Q')
$$
is the pyramid over $\conv(p',q')$ with apex at $z$. In particular,
$$
P\underset f\vee Q'\ \D\le\ P\underset f\vee \conv(Q',z).
$$
\end{proof}

\begin{lemma}\label{Q_to_R} Assume $\le$ coincides with the inclusion order on $\POL(d-1)$. Then $P\underset f\vee Q\ \le\ P\underset f\vee\R(Q,P)$.
\end{lemma}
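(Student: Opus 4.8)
The plan is to realize the passage from $P\underset f\vee Q$ to $P\underset f\vee\R(Q,P)$ as a finite chain of the elementary $\vee$-moves licensed by Lemma \ref{Free_zone}, reducing everything to the hypothesis that $\le$ is the inclusion order on $\POL(d-1)$. First I would set up the right bookkeeping: since $Q\subset\R(Q,P)\subset\conv(Q,v)$, I want to slice the ``gap'' $\R(Q,P)\setminus Q$ by hyperplanes through $\Aff(f)$ (or, equivalently, after applying the projectivity $\Phi$ of \v, by hyperplanes parallel to $\Phi(\Aff(f))$ at infinity), and process the slices one at a time. Concretely, using the rotation/homothety notation from \viii and \ix, I would choose a finite sequence of intermediate polytopes $Q=Q'_0\subset Q'_1\subset\cdots\subset Q'_m=\R(Q,P)$ in which each step $Q'_{j-1}\subset Q'_j$ adds a single new vertex $z_j$ of $\R(Q,P)$ lying in $\R(Q,P)\setminus Q'_{j-1}$, ordered so that $z_j\notin Q'_{j-1}$ but $z_j$ sees exactly one facet of $Q'_{j-1}$ from outside — i.e., $\FF_{z_j}(Q'_{j-1})^+=\{q'\}$ is a singleton.

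The key point is that such an ordering exists: because $\R(Q,P)$ is a polytope sitting over $Q$ (it is ``pyramid-like'' over the base $f$ in the direction of $v$), its vertices outside $Q$ can be enumerated so that each is beyond exactly one facet of the polytope built so far; this is the standard ``shelling-from-a-point'' / placing-triangulation argument, carried out with respect to the beyond-beneath relation determined by $v$ (after $\Phi$, this is the ordinary ``placing'' order for the vertices of $\Phi(\R(Q,P))$ as seen from $\Phi(v)$). Granting this, Lemma \ref{Free_zone} applies verbatim at each step: since $Q\subset Q'_{j-1}\subset\R(Q,P)=\R(P,Q)$ and $z_j\in\R(P,Q)\setminus Q'_{j-1}$ with $\FF_{z_j}(Q'_{j-1})^+$ a singleton, we get
$$
P\underset f\vee Q'_{j-1}\ \le\ P\underset f\vee\conv(Q'_{j-1},z_j)\ =\ P\underset f\vee Q'_j,
$$
and concatenating these $m$ relations yields $P\underset f\vee Q\ \le\ P\underset f\vee\R(Q,P)$, as desired. (In fact each individual step is a $\D\le$-extension by Lemma \ref{Free_zone}, so the conclusion holds for every one of the three orders simultaneously, matching the convention of this section.)

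The one place where the hypothesis ``$\le$ is the inclusion order on $\POL(d-1)$'' is actually consumed is already inside Lemma \ref{Free_zone} (through the structure of $\FF_z(P\underset f\vee Q')^+$ and the $\vee$-description of the added piece), so here it enters only by citation; I do not expect to need it again directly. The main obstacle is the combinatorial lemma guaranteeing the ``one-new-facet-at-a-time'' enumeration of $\vertex(\R(Q,P))\setminus\vertex(Q)$: one must check that when a vertex $z_j$ is placed, no facet of the current $Q'_{j-1}$ other than the single visible one $q'$ is affected, and that at the end one recovers all of $\R(Q,P)$ and not a smaller polytope. This is where the defining properties of the $\R$-construction — that from any $z\in\R(Q,P)\setminus Q$ no facet of $P$ is visible, and that $\R(Q,P)$ is the \emph{largest} polytope between $Q$ and $\conv(Q,v)$ with this property — do the work: they force the relevant visibility cones to behave monotonically as vertices are added, so a placing order with respect to $v$ (equivalently, by increasing ``height'' toward $v$ after $\Phi$) has the required property. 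Once that enumeration is in hand, the rest is a routine induction on $m$.
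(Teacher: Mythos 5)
Your plan runs into a genuine gap at exactly the place you flag as ``the main obstacle.'' You assert that the vertices of $\R(Q,P)\setminus Q$ admit an enumeration $z_1,\ldots,z_m$ so that each $z_j$ sees \emph{exactly one} facet of $Q'_{j-1}=\conv(Q,z_1,\ldots,z_{j-1})$, and you call this ``the standard shelling-from-a-point / placing-triangulation argument.'' That is not what a placing order gives you: when a vertex is placed beyond the current polytope it generally sees several of its facets, and then $\conv(Q'_{j-1},z_j)$ is not a pyramidal extension. In fact, the statement that two nested $(d-1)$-polytopes $Q\subset\R(Q,P)$ can always be connected by a ``one-new-vertex, one-visible-facet'' chain is precisely the assertion that $\D\le$ coincides with $\subset$ on $\POL(d-1)$ --- the non-trivial claim the whole paper is building toward, and the content of the hypothesis of the lemma. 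You cannot appeal to it as a standard combinatorial fact; you have to appeal to it as the hypothesis.

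This is connected to a second misreading: you say the hypothesis ``$\le$ is the inclusion order on $\POL(d-1)$'' is ``already consumed inside Lemma~\ref{Free_zone}.'' It is not. Lemma~\ref{Free_zone} is a purely geometric statement about the $\R$-construction (that from a point in $\R(Q,P)\setminus Q'$ no facet of $P$ is visible, so the added piece is a pyramid), and its proof never invokes the hypothesis. In the paper, the hypothesis is consumed in Lemma~\ref{Q_to_R} itself, and in exactly one place: since $Q\subset\R(Q,P)$ are $(d-1)$-polytopes and $\le$ is the inclusion order on $\POL(d-1)$, one already \emph{has} a finite chain of elementary $\le$-relations from $Q$ to $\R(Q,P)$; Lemma~\ref{Free_zone} then lifts each elementary step through $P\underset f\vee(-)$. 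So the fix to your argument is simple but essential: delete the hand-built placing order and replace it by the chain that the hypothesis delivers directly (noting, for $\q\le$, that the clause $P_0\subset P'_i$ in Definition~\ref{Quasi_growth} keeps every relevant intermediate polytope in the window $Q\subset\,\cdot\,\subset\R(Q,P)$ so that Lemma~\ref{Free_zone} still applies).
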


\begin{proof}
By Lemma \ref{Free_zone}, for any intermediate polytopes $Q\subset Q'\subset Q''\subset\R(Q,P)$, forming an elementary order relation $Q'\le Q''$, we have $P\underset f\vee Q'\le  P\underset f\vee Q''$. This proves the lemma because, by the inductive assumption on the dimension, $Q\le\R(Q,P)$. 

Notice that Lemma \ref{Free_zone} applies to $\q\le$ because of the requirement $P_0\subset P'_i$, mentioned in Definition \ref{Quasi_growth}.
\end{proof}

\begin{lemma}\label{S_Theta}
Assume $\le$ coincides with the inclusion order on $\POL(d-1)$. Let an intermediate polytope $Q\subset S\subset\conv(Q,v)$ satisfy $(\bigstar)$. Then, for all sufficiently small real numbers $\theta_s>0$, $s\in\FF_v(S)^+$, we have $P\underset f\vee S\ \le\ P\ \underset f\vee\ {\SS}_\Theta(S,P)$, where $\Theta=\{\theta_s\ |\ s\in\FF_v(S)^+\}$.
\end{lemma}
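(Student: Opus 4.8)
The plan is to reduce the passage from $P\underset f\vee S$ to $P\underset f\vee{\SS}_\Theta(S,P)$ to a sequence of elementary rotations, one facet of $\FF_v(S)^+$ at a time, and to realize each single-facet rotation as an $\R$-type extension handled by Lemma \ref{Q_to_R} together with the induction hypothesis on dimension. First I would fix an enumeration $s_1,\ldots,s_m$ of $\FF_v(S)^+$ and interpolate between $S$ and ${\SS}_\Theta(S,P)$ by a chain of intermediate polytopes $S=S_0\subset S_1\subset\ldots\subset S_m={\SS}_\Theta(S,P)$, where $S_j$ is obtained from $S_{j-1}$ by rotating only the affine hull of the facet $\sigma(s_j)$ of $P\underset f\vee S_{j-1}$ towards $v$ by the small angle $\theta_{s_j}$ about $\Aff(\rho(s_j))$. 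Using Lemma \ref{R_good} one checks inductively that each $S_j$ still satisfies $(\bigstar)$ (for sufficiently small angles), so the construction is legitimate and it suffices to prove $P\underset f\vee S_{j-1}\le P\underset f\vee S_j$ for each single step.

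For a single-facet step I would argue as follows. The closure $\Delta_j=\overline{(P\underset f\vee S_j)\setminus(P\underset f\vee S_{j-1})}$ is the region swept out between the old affine hull $\Aff(\sigma(s_j))$ and the rotated one $\Aff((\sigma(\rho(s_j)))_{\theta_{s_j}}(\rho(s_j),S_{j-1},v)$, i.e.\ a wedge whose ``hinge'' is $\Aff(\rho(s_j))$ and which is bounded away from $v$ by the other, unmoved facet hyperplanes of $P\underset f\vee S_{j-1}$. The key observation is that $\Delta_j$ meets $P\underset f\vee S_{j-1}$ exactly along the facet $\sigma(s_j)$, and that $\Delta_j$ is itself of the shape $P\underset f\vee(\text{thin wedge over }S_{j-1})$; more precisely, after applying the projective transformation $\Phi$ (which sends $\Aff(f)$ to infinity) the polytope $P\underset f\vee S_{j-1}$ and its rotated neighbor become prisms over $\Phi(P)$-direction, and the added wedge $\Phi(\Delta_j)$ is the $\vee$-join, along $\Phi(f)$-at-infinity, of $\Phi(P)$-type data with the genuinely $(d-1)$-dimensional wedge $S_{j-1}\underset{\text{(facet swept by }s_j)}{\vee}(\text{rotated copy})$. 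For small $\theta_{s_j}$ that $(d-1)$-dimensional wedge is a legitimate polytope sharing a common facet with $S_{j-1}$, so by the inductive hypothesis that $\le$ is the inclusion order on $\POL(d-1)$ we get $S_{j-1}\le(\text{that wedge})$; then Lemma \ref{Q_to_R}, or rather the single-step mechanism behind it (Lemma \ref{Free_zone}, applied with the rotated facet playing the role of the ``free zone'' where no facet of $P$ is visible), promotes this to $P\underset f\vee S_{j-1}\le P\underset f\vee S_j$. Here one uses crucially that $s_j\in\FF_v(S_{j-1})^+$ and that $(\bigstar)$ guarantees the companion facet $\sigma(s_j)$ sees no facet of $P$, so that the $\vee$-join with $P$ does not create obstructions — exactly the situation covered by Lemma \ref{Free_zone}. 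Concatenating the $m$ steps gives $P\underset f\vee S\le P\underset f\vee{\SS}_\Theta(S,P)$.

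The main obstacle I expect is bookkeeping the visibility/combinatorics during the intermediate steps: one must verify that after rotating the hyperplane of $\sigma(s_j)$ the facet structure of $P\underset f\vee S_j$ is still ``compatible'', i.e.\ $(\bigstar)$ persists, the maps $\rho,\sigma$ behave coherently, and no codimension-$2$ face of $P$ becomes visible from the newly swept region — all of which forces the angles $\theta_s$ to be taken uniformly small, and the ``sufficiently small'' has to be chosen after fixing the enumeration. A secondary subtlety is checking that the swept wedge $\Delta_j$ genuinely has the $\vee$-product form under $\Phi$ so that the $(d-1)$-dimensional induction hypothesis is applicable; this is where the dimension count in notation item \viii{} and the hypothesis that $S$ satisfies $(\bigstar)$ do the real work. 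Once these compatibility facts are in hand, the rest is a routine assembly of Lemmas \ref{Free_zone}, \ref{Q_to_R}, and \ref{R_good}.
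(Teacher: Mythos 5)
Your approach — rotating one facet hyperplane at a time — is genuinely different from the paper's proof and, as stated, has a real gap. The paper does not interpolate through a chain $S_0\subset S_1\subset\ldots\subset S_m=\SS_\Theta(S,P)$ of single-facet rotations. Instead it picks a spike point $z_i$ on the segment $[\gamma_i,v]$ just above the barycenter $\gamma_i$ of each $s_i\in\FF_v(S)^+$, chosen so that all $z_i$ lie in $\R(\conv(S,z_1,\ldots,z_n),P)$ and the segments $[z_i,z_j]$ cut through $\inte(S)$. Adding the $z_i$ one at a time is then covered by Lemma \ref{Free_zone} (each $z_i$ sees exactly one facet, and no facet of $P$), the passage from $\conv(S,z_1,\ldots,z_n)$ to $\R(\conv(S,z_1,\ldots,z_n),P)$ is covered by Lemma \ref{Q_to_R}, and the punchline is the identity
\[
P\underset f\vee\R\big(\conv(S,z_1,\ldots,z_n),P\big)=P\underset f\vee\SS_\Theta(S,P),
\]
with the angles $\theta_s$ read off from the positions of the $z_s$. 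The whole "rotation" is absorbed into a single $\R$-construction applied once at the end; there is no step-by-step rotation of hyperplanes.

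The gap in your version is the lift $P\underset f\vee S_{j-1}\le P\underset f\vee S_j$. You appeal to "Lemma \ref{Free_zone}, applied with the rotated facet playing the role of the free zone", but Lemma \ref{Free_zone} is formulated and proved only for extensions staying inside a fixed $\R$-polytope; its proof rests on the property of the $\R$-construction that points there cannot see a facet of $P$. After rotating one hyperplane, the new vertices of $S_j$ lie strictly beyond the $\R$-boundary that was relevant before the rotation, so you would need a variable-free-zone version of Lemma \ref{Free_zone}, which you have not proved and which is not in the paper. Relatedly, your claim that the swept wedge $\Delta_j$ has a $\vee$-join (prism-after-$\Phi$) form over a $(d-1)$-dimensional wedge is not established: $\Delta_j$ is trimmed by the other unmoved facet hyperplanes and by $\partial\big(P\underset f\vee\conv(Q,v)\big)$, so it need not have the clean product structure needed to invoke the $(d-1)$-dimensional induction directly. (Also, the projective map $\Phi$ plays no role in the paper's proof of this lemma; invoking it here introduces complications without resolving the visibility bookkeeping.) If you want to save the one-facet-at-a-time route, the missing ingredient is precisely what the paper's choice of spike points accomplishes: a single, self-consistently chosen free zone $\R(\conv(S,z_1,\ldots,z_n),P)$ containing all intermediate polytopes, so that Lemma \ref{Free_zone} applies verbatim at every step.
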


\begin{proof}
Assume $\FF_v(S)^+=\{s_1,\ldots,s_n\}$. For every index $i\in\{1,\ldots,n\}$, let $\gamma_i$ be the barycenter of $s_i$.\footnote{For the present argument, we can use any points in $\inte(s_i)$; the barycenters are adjusted to working with $\POL_k(d)$ for a subfield $k\subset\RR$.} Choose $z_i$ in $[\gamma(s_i),v]$, sufficiently close to but different from $\gamma_i$, so that the following two conditions are satisfied:

\begin{align*}
s_i\in\R\big(\conv(S,s_1,\ldots,s_n),P\big)\quad\text{and}\quad[z_i,z_j]&\cap\inte(S)\not=\emptyset,\\
&1\le i, j\le n,\ \ i\not=j.
\end{align*}

\medskip\noindent Then Lemma \ref{Free_zone} implies

{\small
$$
P\ \underset f\vee\ S\ \le\ P\ \underset f\vee\ \conv(S,s_1)\ \le\ P\ \underset f\vee\ \conv(S,s_1,s_2)\le\ \ldots\  \le\ P\ \underset f\vee\ \conv(S,s_1,\ldots,s_n).
$$
}

By Lemma \ref{Q_to_R}, we also have
$$
P\ \underset f\vee\ \conv(S,s_1,\ldots,s_n)\ \le\ P\ \underset f\vee\ \R\big(\conv(S,s_1,\ldots,s_n),P\big).
$$
We are done because 
$$
P\ \underset f\vee\ \R\big(\conv(S,s_1,\ldots,s_n),P\big)={\SS}_\Theta(S,P)
$$
for the appropriate $\Theta=\{\theta_s\ |\ s\in\FF_v(S)^+\}$. Namely, the angles $\theta_s$ are determined by the condition
$$
z_s\in\Aff(\sigma(\rho(s))_{\theta_s}(\rho(s),S,v),\qquad s\in\FF_v(S)^+.
$$ 
\end{proof}

\begin{lemma}\label{Q1}
Assume $\le$ coincides with the inclusion order on $\POL(d-1)$. Then there is an intermediate polytope $\R(Q,P)\subset Q_1\subset\conv(Q,v)$, such that:
\begin{enumerate}[\rm(a)]
\item
$Q_1$ satisfies $(\bigstar)$;
\item The map $\rho:\FF_v(Q_1)^+\to\FF_v(P)^+$ from Section \ref{R_S_constructions} is a bijection;
\item $P\underset f\vee\R(Q,P)\ \le\ P\underset f\vee Q_1$.
\end{enumerate}
\end{lemma}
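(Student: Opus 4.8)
The plan is to build $Q_1$ from $\R(Q,P)$ by a single application of the $\SS$-construction, after first enlarging $\R(Q,P)$ slightly so that every facet of $P$ visible from $v$ actually participates in the correspondence $\rho$. Concretely, write $\FF_v(P)^+=\{p_1,\ldots,p_m\}$. By the defining properties of $\R(Q,P)$, for a point $z\in\R(Q,P)\setminus Q$ the visible facets of $P\underset f\vee\R(Q,P)$ are convex hulls of faces of $\R(Q,P)$ visible from $v$ with faces of $P$ that are \emph{not} facets; the missing facets $p_j$ are exactly those cut off by the hyperplanes $\Aff_v(D,w)^-$ appearing in the intersection defining $\R(Q,P)$. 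First I would stack, one at a time, small pyramids onto the facets of $\R(Q,P)$ sitting in those hyperplanes $\Aff(D,w)$, choosing the apexes close enough to the respective facets that (i) convexity is preserved, (ii) no codimension-$2$ face disappears, and (iii) the resulting polytope $\R(Q,P)'$ still lies in $\conv(Q,v)$ and still has the property that from any $z\in\R(Q,P)'\setminus Q$ one sees only convex hulls of visible faces of $\R(Q,P)'$ with \emph{non-facet} faces of $P$, except that now each $p_j$ is "almost" hit. The point of pushing these facets outward just a little is that after this push the face of $\R(Q,P)'$ that will pair with $p_j$ under the eventual $\rho$ is genuinely a facet.

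The second step is to apply Lemma~\ref{S_Theta} to $S=\R(Q,P)'$ (which satisfies $(\bigstar)$ by Lemma~\ref{R_good}, since it is obtained from the $(\bigstar)$-polytope $\R(Q,P)$ by pyramidal extensions of the type handled in Lemma~\ref{Free_zone}, hence is itself sandwiched appropriately — one should double-check that the small pyramids can be arranged so $(\bigstar)$ is retained, which is where the "sufficiently small" choices matter). Lemma~\ref{S_Theta} then gives, for all sufficiently small $\theta_s>0$, $P\underset f\vee S\ \le\ P\underset f\vee\SS_\Theta(S,P)$. Setting $Q_1=\SS_\Theta(S,P)$ for a suitable such $\Theta$, part (c) follows by concatenating $P\underset f\vee\R(Q,P)\ \le\ P\underset f\vee S$ (from the pyramidal extensions of step one, via Lemma~\ref{Free_zone}, noting as in Lemma~\ref{Q_to_R} that this argument is valid for $\q\le$ because of the $P_0\subset P_i'$ clause in Definition~\ref{Quasi_growth}) with the $\SS$-growth inequality; and part (a) is exactly the content of Lemma~\ref{R_good} applied to $S$.

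For part (b), the rotations in the $\SS$-construction are designed precisely so that the previously "obstructed" facets $p_j$ of $P$ become visible contributors: rotating the visible facets of $P\underset f\vee S$ towards $v$ about $\Aff(\rho(s))$ tilts them away from blocking the $p_j$, and choosing the $\theta_s$ small but positive makes each $p_j\in\Im(\rho)$ while keeping the bijection on the old part intact. I would argue that $\rho:\FF_v(Q_1)^+\to\FF_v(P)^+$ is injective by Lemma~\ref{R_good} (it holds for any $(\bigstar)$-polytope) and surjective by a direct visibility check: after the outward pyramidal push of step one there is, for each $p_j$, a facet $E_j$ of $S$ with $\conv(p_j,E_j)\subset\partial_v(P\underset f\vee S)^+$, and the rotation by $\theta_{s}$ does not destroy this incidence for small $\theta_s$; hence $p_j\in\Im(\rho)$ for every $j$, so $\rho$ is onto, and being an injection between finite sets of equal cardinality it is a bijection.

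The main obstacle I anticipate is step one: making the initial pyramidal enlargement of $\R(Q,P)$ so that \emph{every} visible facet of $P$ is genuinely picked up, while simultaneously keeping $(\bigstar)$, keeping the polytope inside $\conv(Q,v)$, and keeping all codimension-$2$ faces — i.e., verifying that there is enough room near each hyperplane $\Aff(D,w)$ to stack a nondegenerate pyramid with all these properties at once. This is a local wiggle-room argument (one chooses the apexes in a small enough neighborhood), but spelling out that the finitely many constraints are simultaneously satisfiable, and that the passage $\R(Q,P)\le S$ remains an instance of the relation $\le$ for all three of $\D\le$, $\q\le$, $\Infty\le$, is the delicate part. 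Everything after that is bookkeeping built on Lemmas~\ref{R_good}, \ref{Free_zone}, \ref{Q_to_R}, and \ref{S_Theta}.
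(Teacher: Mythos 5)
Your proposal takes a genuinely different route from the paper, and both of its two steps have gaps. The paper's own proof is a pure iteration of the $\SS$-construction: set $R_0=\R(Q,P)$, then $R_{t+1}={\SS}_{\Theta_t}(R_t,P)$ for sufficiently small angles, invoking Lemma~\ref{S_Theta} for the order relation, Lemma~\ref{R_good} to keep $(\bigstar)$, and the strict inclusion $\Im(\rho_t)\subsetneq\Im(\rho_{t+1})$ as the termination measure. There is no preliminary pyramidal enlargement, and the paper never asserts that a single rotation makes $\rho$ onto.

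The first gap is in your step one. Stacking a pyramid onto a facet of $\R(Q,P)$ lying in one of the cutting hyperplanes $\Aff(D,w)$ puts the apex $z$ strictly outside $\R(Q,P)$, which takes you outside the hypotheses of Lemma~\ref{Free_zone}: that lemma requires $z$ to lie inside $\R(Q,P)$, and its conclusion rests precisely on $z$ not seeing any facet of $P$. By the maximality clause in the definition of the $\R$-construction, an apex past a cutting hyperplane \emph{does} see some $D\in\FF_v(P)^+$ relative to $P\underset f\vee Q$, so $\FF_z(P\underset f\vee\R(Q,P))^+$ has at least two elements and the stacking of a pyramid on $\R(Q,P)$ no longer induces a pyramidal extension of $P\underset f\vee\R(Q,P)$. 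You flag this as the delicate part, but it is not a wiggle-room issue; the lemma you lean on simply does not apply beyond $\R(Q,P)$, and no replacement is offered.

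The second gap is the surjectivity claim. Even granting step one, a single application of ${\SS}_\Theta$ with small $\theta_s>0$ is not shown to make $\rho$ onto. The paper only extracts that $\Im(\rho)$ grows by at least one element per step, which is exactly why it iterates. Your visibility argument --- that after the push there is, for each $p_j$, a facet $E_j$ of $S$ with $\conv(p_j,E_j)\subset\partial_v(P\underset f\vee S)^+$ and the rotation preserves this --- conflates being contained in the visible boundary with being a facet of $P\underset f\vee S$; if the former already implied $p_j\in\Im(\rho)$, the ${\SS}$-step would be superfluous in your own scheme. The clean repair is the paper's: drop step one, iterate ${\SS}_\Theta$ with small angles, and use $\#\Im(\rho_t)$ as the strictly increasing measure of progress.
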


\begin{proof}
To simplify notation, put $R:=\R(Q,P)$. By Lemma \ref{R_good}, the polytope $R$ satisfies $(\bigstar)$.  We have the maps, mentioned in Section \ref{R_S_constructions}:
\begin{align*}
&\rho:\FF_v(R)^+\to\FF_v(P)^+,\\
&\sigma:\FF(P)^+\to\FF_v(P\underset f\vee R)^+.
\end{align*}

Assume the injective map $\rho$ is not bijective. We will promote $\rho$ to a bijection by inductively constructing a sequence of polytopes 
$$
R=R_0\subset R_1\subset\ldots\subset R_n\subset \conv(Q,v),
$$
such that:

\begin{enumerate}[\rm$\centerdot$]
\item Each $R_i$ satisfies $(\bigstar)$;
\item
$P\underset f\vee R_0\ \le\ P\underset f\vee R_1\ \le\ \ldots\ \le\  P\underset f\vee R_n$;
\item $\#\FF(R_0)<\#\FF(R_1)<\ldots<\#\FF(R_n)=\#\FF(P)$.
\end{enumerate}

We will use the following notation for the corresponding maps:
\begin{align*}
&\rho_i:\FF_v(R_i)^+\to\FF_v(P)^+,\\
&\sigma_i:\FF(P)^+\to\FF_v(P\underset f\vee R_i)^+.
\end{align*}
In particular, $\rho_0=\rho$ and $\sigma_0=\sigma$.

Assume, after $t$ steps, we have produced polytopes $R_1,\ldots,R_t$ with the desired properties. Assume $\#\FF(R_t)<\#\FF(P)$ or, equivalently, $\rho_t$ is not a bijection. Choose sufficiently small real numbers 
$$
\theta_r>0,\qquad r\in\FF_v(R_t)^+,
$$
and put $\Theta=\{\theta_r\}$. The polytope $R_{t+1}$ satisfies $(\bigstar)$ (Lemma \ref{R_good}). By Lemma \ref{S_Theta}, we have $P\underset f\vee R_t\ \le\ P\underset f\vee R_{t+1}$. Simultaneously, 
\begin{align*}
\{p\in\FF_v(P)^+\ &|\ \sigma_k(p)\cap R_t\ \text{is a facet in}\ 
\partial_v(R_t)^+\}\subsetneq\\
&\{p\in\FF_v(P)^+\ |\ \sigma_{t+1}(p)\cap R_{t+1}\ \text{is a facet in}\ 
\partial_v(R_{t+1})^+\}.
\end{align*}
or, equivalently, $\Im(\rho_t)\subsetneq\Im(\rho_{t+1})$. 
\end{proof}

\subsection{Projective transformation $\Psi_\lambda$}\label{Psi_lambda} To complete the proof of Proposition \ref{Main_sequence}, we need another projective transformation.

Let $A$ be a space and $H\not=H'$ be parallel codimension one subspaces of $A$. Choose a point $w\in H$. Then, for any real number $\lambda\not=0$, there is a (unique) projective transformation $\Psi_\lambda$ of $A$, such that the restriction $(\Psi_\lambda)|_H$ is the homothety, centered at $w$ with coefficient $\lambda$, and the restriction $(\Psi_\lambda)|_{H'}$ is the identity map. 

Next we give a geometric description of $\Psi_\lambda$ when $0<\lambda<1$. This description implies a property of $\Psi_\lambda$, which will be important in the second use of $\Psi_\lambda$ in Section \ref{Quasi_proof}. Let $\curvearrowright$ be any $90^\circ$-rotation of $A$ inside $\oplus_\NN\RR$ about $H'$ and $\curvearrowleft$ be the $90^\circ$-rotation in the opposite direction. Denote by $\overset\curvearrowright A$, $\overset\curvearrowright H$, and $\overset\curvearrowright w$ the corresponding images. Consider the point $\pi\in\Aff(w,\overset\curvearrowright w)$, such that $\overset\curvearrowright w$ is between $\pi$ and $w$ and $\frac{\|\pi-\overset\curvearrowright w\|}{\|\pi-w\|}=\lambda$; see Figure \ref{projective_fig} below.%\todo{fixed}:
\begin{figure}[h!]
\caption{}
\vspace{.15in}
\includegraphics[%trim = 0mm 0.1in 0mm 0.1in, clip, 
scale=1.4]{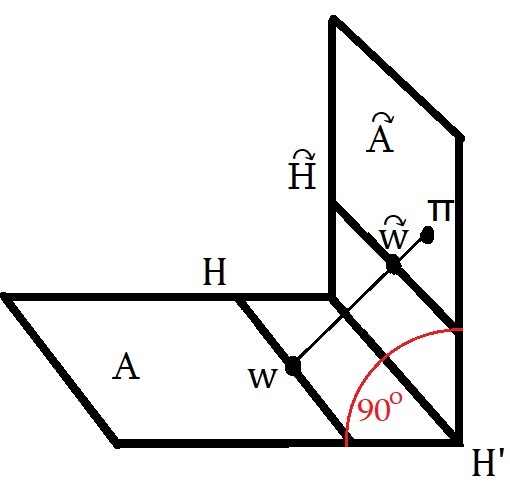}\label{projective_fig}
\end{figure}

Consider the polar projection $\text{proj}_\pi:A\dasharrow\overset\curvearrowright A$ from the pole $\pi$. Its domain includes the half-space of $A$, bounded by $H'$ and containing $H$. We have
$$
\Psi_\lambda=\curvearrowleft\ \circ\ \text{proj}_\pi.
$$
When $\lambda$ converges to $0$ from the right and $A$, $H$, $H'$, $w$ stay fixed, the pole $\pi$ converges to $\overset\curvearrowright w$ by sliding along the line $\Aff(w,\overset\curvearrowright w)$. In view of the equality above we arrive at the conclusion that, for $A$, $H$, $H'$, $w$ as above and a point in the half-space of $A$, bounded by $H'$ and containing $H$%\todo{fixed},
\begin{equation}\label{Polar_contraction}
\lim_{\lambda\to0^+}\Psi_\lambda(z)=w.
\end{equation}

\subsection{Proof of Proposition \ref{Main_sequence}}\label{Proof_prop}

Let $Q_1$ be as in Lemma \ref{Q1}.

First we observe that, for any number $0<\lambda<1$, there is a sequence of polytopes 
$$
Q_1\subset Q_2\subset\ldots\subset\conv(Q,v)
$$
and a system of real numbers
$$
\theta_{iq}>0,\quad q\in\FF_v(Q_{i-1})^+,\quad i=2,3,\ldots\ ,
$$
such that, for every $i\ge2$, we have:

\begin{enumerate}[$\centerdot$]
\item $Q_i={\SS}_{\Theta_i}(Q_{i-1},P)$ with $\Theta_i=\{\theta_{iq}\}$;
\item
$\Phi(Q_i\setminus f)$ is the homothetic image of $\Phi(Q_{i-1}\setminus f)$ with center $\Phi(v)$ and factor $\lambda$.
\end{enumerate}

In fact, for the existence of the family 
$\{\theta_{2q}>0\ |\ q\in\FF_v(Q_1)^+\}$, such that $Q_1$ and the corresponding polytope $Q_2$ have the desired properties, one only needs to adjust the angles $\theta_{2q}$ to fit $\partial_v(Q_2)^+$ into the homothety condition. Once we determine the $\theta_{2q}$, the existence of the next family of real numbers $\{\theta_{3q}>0\ |\ q\in\FF_v(Q_2)^+\}$ with the similar properties is obvious for the same reason, and the process can be iterated.

By Lemma \ref{S_Theta}, if $\lambda<1$ is sufficiently close to $1$ then $P\underset f\vee Q_1\le P\underset f\vee Q_2$. We want to show that, for the \emph{same} $\lambda$, we have

$$
P\underset f\vee Q_{i-1}\ \le\ P\underset f\vee Q_i,\qquad i=2,3,\ldots
$$

This is done as follows. Let $\Psi_\lambda$ be the projective transformation, introduced in Section \ref{Psi_lambda}, with the following specializations: 
\begin{align*}
&A=\Aff\big(\Phi\big((P\underset f\vee Q)\setminus f\big)\big),\\
&H=\Aff\big(\Phi(Q\setminus f)\big),\\
&H'=\Aff\big(\Phi(P\setminus f)\big),\\
&w=\Phi(v).
\end{align*}
We have 

$$
Q_i=\conv\big(Q,\ (\Phi^{-1}\circ\Psi\circ\Phi)(Q_{i-1}\setminus f)\big).
$$
In particular, by pushing forward the chain of elementary order relations along the projective transformation $\Phi^{-1}\circ\Psi\circ\Phi$, we arrive at the implication
$$
P\underset f\vee Q_{i-1}\ \le\ P\underset f\vee Q_i\quad \Longrightarrow\quad P\underset f\vee Q_i\ \le\ P\underset f\vee Q_{i+1}.
$$
Consequently, the inequality $P\underset f\vee Q_{i-1}\ \le\ P\underset f\vee Q_i$ propagates from the initial value $i=2$ to all values of $i$. 
\qed

\subsection{Transfinite pyramidal growth}\label{4.5} Here we prove Theorem \ref{Theorem_A}. We will induct on dimension, the base one-dimensional case being obvious. Assume the claim has been shown for $\POL(d-1)$. By Lemmas \ref{v_reduction}, it is enough to show (\ref{v-assumption}) holds for $d$, i.e., we have to show $P\Infty\le P\underset f\vee Q$.

Assume 
$\{v_0,v_1,\ldots,v_n\}=\vertex(Q)\setminus f$
and consider the polytopes
\begin{align*}
T_i=\conv(f,v_0,\ldots,v_i),\qquad i=0,\ldots,n.
\end{align*}
In view of Proposition \ref{Main_sequence}, we can write
\begin{align*}
P\ \D\le\ \conv(P,v_0)=P\underset f\vee T_0\ \Infty\le\ P\underset f\vee T_1\ \Infty\le\ \ldots\ \Infty\le\ P\underset f\vee T_n\ =\ P\underset f\vee Q.\qed
\end{align*}

\section{Pyramidal growth of 3-polytopes}\label{Dim_three} 

By Corollary \ref{polygons}, we already know that the pyramidal growth is the inclusion order in $\POL(d)$, $d\le2$.

Here we prove Theorem \ref{Theorem_B}, with the use of an essential part of Proposition \ref{Main_sequence}.

By Lemma \ref{v_reduction}, to prove Theorem \ref{Theorem_B}, it is enough to show $P'\D\le P'\underset{f'}\vee Q'$, where $P'$ and $Q'$ are two polygons and $f'=P'\cap Q'$ is a common edge. As we did in Section \ref{4.5}, assume 
$\{v_0,v_1,\ldots,v_n\}=\vertex(Q')\setminus f'$
and consider the polygons
\begin{align*}
T_i=\conv(f,v_0,\ldots,v_i),\qquad i=0,\ldots,n.
\end{align*}
Then $P'\D\le\conv(P',v_0)$ and, moreover, every inclusion in the sequence  
$$
\conv(P',v_0)=P'\underset{f'}\vee T_0\ \subset\ P'\underset{f'}\vee T_1\ \subset\ \ldots\ \subset\ P'\underset{f'}\vee T_n\ =\ P'\underset{f'}\vee Q'.
$$
can be changed to $\D\le$ if we show that
\begin{equation}\label{3_step1}
P\underset f\vee Q\ \D\le\ P\underset f\vee\conv(Q,v),
\end{equation}
whenever:
\begin{enumerate}[\rm$\centerdot$]
\item
$\dim P=\dim Q=2$, 
\item $f=P\cap Q$ is a common edge,
\item $v\in\Aff_f(Q)^+\setminus Q$.\ \v
\end{enumerate}

We will induct on $\#\FF_v(P)^+$ \vii, where the visibility is with respect to $P\underset f\vee Q$.

In the base case, when $\#\FF_v(P)^+=1$, we pick an intermediate polygon $\R(P,Q)\subset Q_1\subset\conv(Q,v)$ as in Lemma \ref{Q1}, and write

$$
P\underset f\vee Q\ \D\le\ P\underset f\vee\R(P,Q)\ \D\le\ P\underset f\vee Q_1\ \D\subset\ P\underset f\vee\conv(Q,v),
$$
where the leftmost inequality is due to Lemma \ref{Q_to_R}.

Assume $n>1$ and we have shown (\ref{3_step1}) for $\#\FF_v(P)^+\le n-1$. 

Consider the case $\#\FF_v(P)^+=n$. In view of Lemmas \ref{Q_to_R} and \ref{Q1}, we can assume that there is bijective correpondence between $\FF_v(Q)^+$ and $\FF_v(P)^+$ so that the facets in $\FF_v(P\underset f\vee Q)^+$ are of the form $\conv(p,q)$, where the edges $p\in\FF_v(P)^+$ and $q\in\FF_v(Q)^+$ correspond to each other. 

By successively enumerating the adjacent vertices in $P$ and $Q$, visible from $v$, we can assume:

\begin{align*}
&\{x_i\}_{i=1}^{n+1}=\partial_v(P)^+\cap\vertex(P),\\
&\{y_i\}_{i=1}^{n+1}=\partial_v(Q)^+\cap\vertex(Q),\\
&\big\{[x_i,x_{i+1}]\big\}_{i=1}^n=\FF_v(P)^+,\\
&\big\{[y_i,y_{i+1}]\big\}_{i=1}^n=\FF_v(P)^+,\\
&\big\{\conv(x_i,x_{i+1},y_i,y_{i+1})\big\}_{i=1}^n=\FF_v(P\underset f\vee Q)^+.\ \vii
\end{align*}

Consider the family of polytopes $\Pi_i$ and polygons $P_i$ and $Q_i$:

\begin{align*}
&\Pi_i=\big(P\underset f\vee\conv(Q,v)\big)\ \bigcap\ \bigcap_{j=i}^n \Aff\big(x_i,x_{i+1},y_i,y_{i+1}\big)_v^-,\  \vi\\
&P_i=\Pi_i\cap\Aff(x_i,x_{i+1},y_i,y_{i+1}),\\
&Q_i=\big(P_i\setminus\conv(x_i,x_{i+1},y_i,y_{i+1})\big)\cup[x_i,y_i],\\
&\qquad\qquad\qquad\qquad\qquad\qquad\qquad\qquad\qquad i=1,\ldots,n.
\end{align*}
 
We have:
\begin{align*}
&\Pi_1=P\underset f\vee Q,\\
&\Pi_n\ \D\subset\ P\underset f\vee\conv(Q,v),\\
&\overline{\Pi_{i+1}\setminus\Pi_i}\ =\ P_i\underset{[x_{i+1},y_{i+1}]}\vee Q_{i+1}\ \ \text{for}\ \ i=1,\ldots,n-1.
\end{align*}
%(Notice, the equalities for $\overline{\Pi_{i+1}\setminus\Pi_i}$ are essentially three-dimensional phenomena.)

Consequently, to prove (\ref{3_step1}) it is enough to show
\begin{equation}\label{3_step2}
P_i\ \ \D\le\ \ P_i\underset{[x_{i+1},y_{i+1}]}\vee Q_{i+1},\qquad i=1,\ldots,n-1.
\end{equation}

For every index $1\le i\le n$, the vertices of $P_i$ are determined as follows:
\begin{align*}
\vertex(P_i)=\{x'_1,x'_2,\ldots,x'_{i-1},x_i,x_{i+1},y_{i+1},y'_1\},
\end{align*}
where
\begin{align*}
x'_1&=[x_1,v]\cap\Aff(x_i,x_{i+1},y_i,y_{i+1}),\\
x'_2&=[x_2,v]\cap\Aff(x_i,x_{i+1},y_i,y_{i+1}),\\
&\ldots\ldots\ldots\\
x'_{i-1}&=[x_{i-1},v]\cap\Aff(x_i,x_{i+1},y_i,y_{i+1}),\\
y'_1&=[y_1,v]\cap\Aff(x_i,x_{i+1},y_i,y_{i+1}).
\end{align*}

In particular, $\#\vertex(P_i)=i+3$. Therefore, (\ref{3_step2}) follows from the following

\begin{lemma}
Assume $n\ge2$ and (\ref{3_step1}) holds for $\#\FF_v(P)^+\le n-1$. If $P'$ and $Q'$ are polygons, $f'=P'\cap Q'$ is a common edge, and $\#\vertex(P')\le n+2$ then $P'\D\le P'\underset{f'}\vee Q'$. 
\end{lemma}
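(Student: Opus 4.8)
The final lemma is the inductive workhorse behind Theorem~\ref{Theorem_B}: it asserts that whenever $(\ref{3_step1})$ has already been established for all base polygons with at most $n-1$ visible edges, then $P'\D\le P'\underset{f'}\vee Q'$ holds for any polygons $P',Q'$ sharing a common edge $f'$ with $\#\vertex(P')\le n+2$. Since $(\ref{3_step2})$ is exactly a family of instances of this lemma with $\#\vertex(P_i)=i+3\le n+2$, proving it closes the induction.

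\medskip\noindent\emph{The plan.} I would argue by a secondary induction, this time on $m:=\#\vertex(P')$. The base case $m\le 3$ (so $P'$ is a triangle or a segment) is essentially Corollary~\ref{polygons} together with the observation that in those dimensions $P'\underset{f'}\vee Q'$ is obtained from $P'$ by stacking a single pyramid onto the edge $f'$, hence $P'\D\le P'\underset{f'}\vee Q'$ outright. For the inductive step, assume the lemma for base polygons with fewer than $m$ vertices and let $\#\vertex(P')=m\le n+2$. Pick a point $v'\in\Aff_{f'}(Q')^+\setminus Q'$ deep enough that $\conv(Q',v')$ is a triangle over $f'$; then $P'\underset{f'}\vee Q'\subset P'\underset{f'}\vee\conv(Q',v')$ and it suffices (by Lemma~\ref{Q_to_R} and the argument already run in Section~\ref{Dim_three} for the $\#\FF_{v'}(P')^+$-induction, which we may invoke since $\#\FF_{v'}(P')^+\le m-1\le n-1$ when $v'$ sees at most $m-1$ edges) to realize $P'\underset{f'}\vee\conv(Q',v')$ from $P'$ by pyramidal extensions. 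Concretely, I would reuse the $\R$-construction and Lemma~\ref{Q1} to reduce to the case where $\FF_{v'}(P')^+$ and $\FF_{v'}(Q')^+$ are in the facet-preserving bijection, and then apply $(\ref{3_step1})$ — valid here because $\#\FF_{v'}(P')^+\le n-1$ — to climb from $P'\underset{f'}\vee Q'_1$ up to $P'\underset{f'}\vee\conv(Q',v')$ through the slab polytopes $\Pi_i$, each successive difference being a $\vee$-polytope $P_i\underset{[x_{i+1},y_{i+1}]}\vee Q_{i+1}$ whose base $P_i$ now has \emph{strictly fewer} than $m$ vertices (since shearing toward $v'$ collapses $f'$-adjacent structure), so the secondary induction hypothesis applies to each such step.

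\medskip The only subtlety is bookkeeping: one must verify that the vertex count genuinely drops when passing to the $P_i$, i.e. that $\#\vertex(P_i)<\#\vertex(P')$ for the relevant slabs, and that the visible-edge count $\#\FF_{v'}(P')^+$ can be kept $\le n-1$ by choosing $v'$ appropriately (if $v'$ happens to see all $n$ edges, one first does the reduction of Lemma~\ref{Q1} which does not increase this count, then the hypothesis $(\ref{3_step1})$ for $\le n-1$ still does not literally apply — in that corner case one instead pulls $v'$ closer so that exactly one edge of $P'$ ceases to be visible, splitting the problem). This matching of counters — $m$ for the secondary induction on base-polygon vertices, $n$ for the outer hypothesis on visible edges — is where the proof must be assembled carefully; the geometry itself is entirely planar and the individual pyramidal extensions are visibly legitimate, so I expect the routine part to be short.

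\medskip\noindent\emph{Main obstacle.} The real point of friction is ensuring the two inductions are genuinely well-founded simultaneously: the outer induction (on $\#\FF_v(P)^+$ in Section~\ref{Dim_three}) feeds on this lemma, and this lemma's inductive step wants to invoke $(\ref{3_step1})$ — which is the outer statement — for smaller visible-edge count, \emph{and} invoke itself for smaller vertex count. One must check there is no circular dependency: the lemma at level $n$ uses $(\ref{3_step1})$ only at levels $\le n-1$, and uses itself only at strictly smaller $m$ with the \emph{same} $n$ (the hypothesis ``$(\ref{3_step1})$ holds for $\#\FF_v(P)^+\le n-1$'' is carried along unchanged). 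Verifying this stratification — that every downward reference decreases either $n$, or $m$ with $n$ fixed — is the crux; once it is laid out, each individual reduction is exactly one already-proved lemma (\ref{Free_zone}, \ref{Q_to_R}, \ref{Q1}, \ref{S_Theta}) applied in the plane, and the conclusion $P'\D\le P'\underset{f'}\vee Q'$ follows by concatenating the resulting chains of $\D\le$ relations.
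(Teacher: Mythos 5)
Your proposal diverges from the paper's argument and contains two genuine gaps.

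First, a logical flaw: you pick an external point $v'$ ``deep enough'' that $Q'\subset\conv(f',v')$, observe $P'\underset{f'}\vee Q'\subset P'\underset{f'}\vee\conv(Q',v')$, and then declare it suffices to realize the larger polytope $P'\underset{f'}\vee\conv(Q',v')$ by pyramidal extensions from $P'$. But $\D\le$ is not downward-closed in that sense: $P'\D\le Z$ for some $Z\supsetneq P'\underset{f'}\vee Q'$ does not give $P'\D\le P'\underset{f'}\vee Q'$. You would need a chain of pyramidal extensions that literally terminates at $P'\underset{f'}\vee Q'$, and nothing in your sketch arranges for the chain to pass through it.

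Second, and more fundamentally, you do not control the visibility count. You note that $v'$ could see all $n$ edges of $P'$ (it could in fact see up to $n+1$, since $\#\vertex(P')\le n+2$ leaves $n+1$ edges besides $f'$), which already breaks the hypothesis that $(\ref{3_step1})$ is only available for $\#\FF_v(P)^+\le n-1$. Your fix --- ``pull $v'$ closer so that exactly one edge ceases to be visible'' --- reduces visibility by at most one, not two, so it does not get below the $n-1$ threshold; and pulling $v'$ off the vertex set of $Q'$ further muddies what you are building toward. The paper's proof solves precisely this counting problem with a bisection idea you have not found: it chooses $v'$ to be a vertex of $Q'$ lying on a plane $H$ that meets $P'$ only in the \emph{middle} vertex (if $\#\vertex(P')$ is odd) or the \emph{middle} edge (if even), obtained by rotating $\Aff(P')$ about that middle face until it hits $\partial Q'$ from the far side. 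Stacking the single pyramid $\conv(P',v')$ then acts as a barrier so that every remaining vertex $w'\in\vertex(Q')\setminus\{z_1,z_k,v'\}$ sees edges of $P'$ only within one of the two halves cut off by $H$, giving $\#\FF_{w'}(P')^+\le\max(1,k-3)\le n-1$. This is what legitimizes invoking $(\ref{3_step1})$ at level $\le n-1$ for each subsequent step $P'\underset{f'}\vee\conv(f',v',w'_1,\ldots,w'_j)\D\le P'\underset{f'}\vee\conv(f',v',w'_1,\ldots,w'_{j+1})$, and it is also why the chain ends exactly at $P'\underset{f'}\vee Q'$ rather than overshooting: one adds the actual vertices of $Q'$ one at a time. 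There is no secondary induction on $\#\vertex(P')$ in the paper's proof; the whole lemma is discharged in one pass once the bisecting vertex $v'$ is in hand.

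Your concern about circularity is legitimate and well-posed --- the stratification ($(\ref{3_step1})$ at level $\le n-1$ feeds the lemma, which feeds $(\ref{3_step1})$ at level $n$) is indeed what makes the double induction of Section~\ref{Dim_three} well-founded --- but naming the obstacle is not the same as overcoming it, and the bisection step is the missing ingredient.
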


\begin{proof}
Let $z_1,\ldots,z_k$ be the vertices of $P'$, enumerated in the cyclic order so that $f'=[z_1,z_k]$. There exists a plane $H\subset\Aff(P',Q')$, such that
\begin{align*}
\emptyset\not=Q'\cap H\subset\partial(Q')\quad\text{and}\quad P'\cap H=
\begin{cases}
z_{l+1},\ \ \text{if}\ \ k=2l+1,\\
\big[z_l,z_{l+1}\big],\ \ \text{if}\ \ k=2l.
\end{cases}
\end{align*}
To see this, pick a line $L\subset\Aff(P')$ with
$$
P'\cap L=
\begin{cases}
z_{l+1},\ \ \text{if}\ \ k=2l+1,\\
\big[z_l,z_{l+1}\big],\ \ \text{if}\ \ k=2l,
\end{cases}
$$
and start rotating the plane $\Aff(P')$ about $L$ away from $P'\underset{f`}\vee Q'$, until it hits $\partial Q'$ from the other side.

The plane $H$ contains an element $v'\in\vertex(Q')$. Consider the pyramid 
$$
\conv(P',v')=P'\underset{f'}\vee\conv(f',v').
$$ 
It splits up the set
$$
\big(P'\underset{f'}\vee Q'\big)\ \setminus\ \big(P'\underset{f'}\vee\conv(f',v')\big)
$$
in such a way that, for every vertex $w'\in\vertex(Q')\setminus\{z_1,z_k,v'\}$, we have

\begin{align*}
\FF_{w'}(P')^+\subset
\begin{cases}
\{[z_1,z_2],\ldots,[z_l,z_{l+1}]\}\ \text{or}\ \{[z_{l+1},z_{l+2}],\ldots[z_{k-1},z_k]\},\ \text{if}\quad k=2l+1,\\
\{[z_1,z_2],\ldots,[z_{l-1},z_l]\}\ \text{or}\ \{[z_{l+1},z_{l+2}],\ldots[z_{k-1},z_k]\},\ \text{if}\quad k=2l,\\
\end{cases}
\end{align*}

\medskip\noindent where the visibility is understood with respect to $P'\underset{f'}\vee\conv(f',v')$. In particular, for every vertex $w'\in\vertex(Q')\setminus\{z_1,z_k,v'\}$, we can write
\begin{align*}
\#\FF_{w'}(P')^+\le
\begin{cases}
l\le\max(1,k-3)\le n-1,\ \ \text{if}\ \ k=2l+1\ \ \text{and}\ \ l\ge1,\\
l-1\le k-3\le n-1,\ \ \text{if}\ \ k=2l\ \ \text{and}\ \ l\ge2.\\
\end{cases}
\end{align*}

\medskip\noindent Since the visibility of facets and vertices of $P'$ does not improve when one passes from $P'\underset{f'}\vee\conv(f',v')$ to $P'\underset{f'}\vee Q''$ for any intermediate polytope $\conv(f',v')\subset Q''\subset Q'$, the induction assumption yields the sequence of inequalities

\begin{align*}
P'\ \D\subset\ \conv(P',v')\ =\ P'&\underset{f'}\vee\conv(f',v')\ \D\le\ P'\underset{f'}\vee\conv(f',v',w'_1)\ \D\le\ \ldots\\
&\D\le\ P'\underset{f'}\vee\conv(f',v',w'_1,\ldots,w'_{k-3})\ =\ P'\underset{f'}\vee Q',
\end{align*}

\medskip\noindent where $\{w'_1,\ldots,w'_k\}=\vertex(Q')\setminus\{z_1,z_k,v'\}$.
\end{proof}

\section{Quasi-pyramidal growth}\label{Quasi_proof} 

Here we prove Theorem \ref{Theorem_C}, with a crucial use of Proposition \ref{Main_sequence}. 

\begin{lemma}\label{Blowing_corner}
Let $R\subset S$ be two $(d+1)$-polytopes, sharing a vertex $w$. Then there exists a polytope $T$, for which $R\D\le T\subset S$ and the corner cones of $T$ and $S$ at $w$ coincide, i.e., $\RR_+(T-w)+w=\RR_+(S-w)+w$.
\end{lemma}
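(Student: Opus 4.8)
The statement asserts that, given $R \subset S$ sharing a vertex $w$, we can pyramidally grow $R$ to a polytope $T$ still inside $S$ whose corner cone at $w$ already agrees with that of $S$. The natural approach is to localize everything at $w$ and peel off, one at a time, the facets of the corner cone $\RR_+(S-w)+w$ that are not yet "present" at $w$ in the current polytope. So I would first replace $R$ by a pyramid over $R$ with apex chosen near $w$ (using that $R\D\le$ any pyramid over $R$) to arrange, without loss of generality, that $R$ is full-dimensional and that $w$ is a vertex of $R$ with $R$ sitting inside $S$ near $w$; this is a harmless normalization that lets me assume $\dim R = \dim S = d+1$.

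\textbf{The inductive quantity.} Let $C_S = \RR_+(S-w)+w$ be the corner cone of $S$ at $w$ and let $H_1,\dots,H_m$ be the supporting hyperplanes of its facets. For a polytope $P$ with $R\subset P\subset S$ and $w\in\vertex(P)$, consider $n(P) := \#\{j : \Aff(f)\neq H_j \text{ for every facet } f \text{ of } P \text{ containing } w\}$, i.e. the number of corner facets of $S$ at $w$ not yet realized by $P$. I would induct downward on $n(P)$: if $n(P)=0$ then, since $P\subset S$ and both are full-dimensional with $w$ a vertex, the corner cone of $P$ at $w$ is cut out by exactly the $H_j$ and hence equals $C_S$, so $T=P$ works. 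If $n(P)>0$, pick a hyperplane $H_j$ that is a corner facet of $S$ at $w$ but whose defining halfspace currently cuts strictly into $P$ near $w$; I want to stack a pyramid onto $P$ that "flattens" $P$ against $H_j$ in a neighborhood of $w$ without leaving $S$.

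\textbf{The extension step.} The region $\overline{(P\cap H_j^{\mathrm{inside}})\setminus P}$ near $w$ — more precisely, intersecting $S$ with the halfspace bounded by $H_j$ on the correct side and keeping only the part over a small cap of $P$ around $w$ — should be arranged to be a single pyramid stacked onto one facet of $P$. This is where I'd use that $S$ is convex (so the new region stays inside $S$) together with a careful choice of how large a neighborhood of $w$ to affect: I take the new vertex on $H_j$ close enough to $w$ that the only facet of $P$ whose visibility is involved is the one facet $g$ of $P$ at $w$ that $H_j$ crosses, and that no codimension-2 faces are destroyed improperly. After this pyramidal extension to a polytope $P'$ with $R\D\le P\D\le P'\subset S$, the hyperplane $H_j$ is now a corner facet of $P'$ at $w$, and — crucially — the corner facets of $P$ at $w$ that were already aligned with some $H_i$ remain aligned (the extension happens on the $g$-side, away from those facets). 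Hence $n(P') \le n(P)-1$, and iterating at most $m$ times produces $T$.

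\textbf{Main obstacle.} The delicate point is the extension step: ensuring that the single-hyperplane "flattening near $w$" can genuinely be realized as \emph{one} pyramidal extension (a pyramid glued along a single facet, with $\Delta\cap P$ a facet of $\Delta$) rather than a more complicated local modification, and that it does not accidentally remove or merge other corner facets already aligned with the $H_i$. Getting the neighborhood of $w$ small enough to isolate the affected facet $g$, while simultaneously keeping the new region inside $S$, requires choosing the apex and the cutting plane with some care; I expect the geometry to be straightforward once one passes to the corner cone picture (work in $C_P = \RR_+(P-w)+w$ and note that $C_P \subset C_S$), but writing it so that each move is literally of the form $\Delta = \overline{P'\setminus P}$ is a pyramid with $\Delta\cap P\in\FF(\Delta)$ is the part that needs attention. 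An alternative that sidesteps some of this is to apply a projective transformation sending $w$ to infinity, turning corner cones into (unbounded) prisms and the flattening moves into ordinary facet stackings on a bounded cross-section, then invoke the already-developed $\vee$-machinery of Section \ref{pv_growth}; I would try the direct approach first and fall back to this if the bookkeeping gets unwieldy.
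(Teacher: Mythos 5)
Your approach is genuinely different from the paper's, and it has a gap in the extension step. You propose to pick a missing corner-facet hyperplane $H_j$ of $S$ at $w$, place a new apex $z\in H_j$ near $w$ so that $z$ sees exactly one facet $g$ of $P$, and claim this single pyramid stacking ``flattens'' $P$ against $H_j$, reducing the count $n(P)$. But look at what facets $\conv(P,z)$ actually acquires near $w$: they are of the form $\conv(r,z)$ where $r$ is a ridge (facet of $g$) of $P$ containing $w$. For one of these to lie in $H_j$ you need not only $z\in H_j$ but also $r\subset H_j$, i.e.\ $H_j$ must already support $P$ along a ridge at $w$ -- which is exactly what is \emph{not} the case. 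A concrete instance: take $d+1=3$, $w=0$, $C_S$ the cone over a square and $C_P$ the cone over a small triangle with generic vertices; no single vertex addition aligns a facet plane of the cone with a facet plane of $C_S$. So the downward induction on $n(P)$ does not get off the ground. There is also a structural warning sign: your argument never uses the standing hypothesis of Section~\ref{Quasi_proof} that $\D\le$ coincides with $\subset$ on $\POL(d)$, whereas the paper's proof depends on it crucially.

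The paper instead intersects both corner cones with a transversal $d$-space $H$ (so $\RR_+R\cap H\subset\RR_+S\cap H$ are $d$-polytopes), invokes the inductive hypothesis to get a chain of pyramidal extensions $\Sigma_0\D\subset\cdots\D\subset\Sigma_m$ in that cross-section, and then lifts each new cross-sectional vertex $\zeta_j$ to a vertex $z_j=\mu_j\zeta_j$ of the $(d+1)$-polytope, with $\mu_m\ll\cdots\ll\mu_1\ll1$. The rapid decrease of the $\mu_j$ is what guarantees that each $z_j$ sees only one facet of $\conv(R,z_1,\dots,z_{j-1})$ and stays inside $S$, so each addition is a genuine pyramidal stack, and the limiting corner cone equals $\RR_+S$. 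Your proposed fallback (projective transformation sending $w$ to infinity plus the $\vee$-machinery) is in the right spirit but still would not escape the need for the dimension-$d$ hypothesis: after taking a cross-section you are precisely in the situation where you must grow one $d$-polytope inside another by pyramidal extensions, which is what the hypothesis supplies, not something you can derive hyperplane-by-hyperplane at $w$.
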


\begin{proof}
We can assume $w=0$. Fix a $d$-space $H\subset(\RR S)\setminus\{0\}$, traversing the cone $\RR_+S$ \cite[Prop.~1.21]{Kripo}. By the assumption, there exist pyramidal extensions
\begin{align*}
\RR_+R\cap H\ =\  \Sigma_0\ \D\subset\ \Sigma_1\ \D\subset\ \ldots\ \D\subset\ \Sigma_m\ =\ \RR_+S\cap H.
\end{align*}
Assume $\zeta_j=\vertex(\Sigma_j)\setminus\Sigma_{j-1}$ for $j=1,\ldots,m$. For a system of real numbers
$$
0<\mu_m\ll\mu_{n-1}\ll\ldots\ll\mu_1\ll1,
$$
where $a\ll b$ means ``$\frac ab$ is sufficiently small'', the points $z_j=\mu_j\zeta_j$ satisfy the conditions:
\begin{align*}
&z_1,\ldots,z_m\in S,\\
&\vertex(\conv(R,z_1,\ldots,z_j))=\vertex(R)\cup\{z_1,\ldots,z_j\},\\
&R\ \D\subset\ \conv(R,z_1)\ \D\subset\ \conv(R,z_1,z_2)\ \D\subset\ \ldots\ \D\subset\ \conv(R,z_1,\ldots,z_m),\\
&\RR_+\conv(R,z_1,\ldots,z_m)=\RR_+S.
\end{align*}

\medskip\noindent We can choose $T=\conv(R,z_1,\ldots,z_m)$.
\end{proof}

Assume $\D\le$ is the inclusion order on $\POL(d)$. By Lemma \ref{v_reduction}, it is enough to show that the conclusion of Theorem \ref{Theorem_C} holds for a pair of polytopes $P\subset P\underset f\vee Q'$, where $P$ and $Q'$ are $d$-polytopes with $f=P\cap Q'$ a common facet. 

Assume $\vertex(Q')\setminus f=\{v_1,\ldots,v_n\}$. We will induct on $n$.

For $n=1$, we are done because $P\D\le\ \conv(P,v_1)\ =\ P\underset f\vee\conv(f,v_1)$.

Put $Q=\conv(Q',v_1,\ldots,v_{n-1})$ and $v=v_n$. Let a number $\lambda$, a projective transformation $\Phi$, and a sequence of polytopes $Q=Q_0\subset Q_1\subset Q_2\subset\ldots$ be as in Proposition \ref{Main_sequence}. Assume $\Psi_\lambda$ is the projective transformation, used in Section \ref{Proof_prop}, i.e.,  $\Psi_\lambda$ is the homothetic contraction of $\Aff(\Phi(Q\setminus f))$, centered at $\Phi(v)$ with coefficient $\lambda$, and the identity map on $\Aff(\Phi(P \setminus f))$.

We can assume that the polytopes $P\underset f\vee Q_1$ and $P\underset f\vee\conv(Q,v)$ have the same corner cones at every vertex from $\vertex(f)$: this can be achieved by starting with $Q_2$ instead of $Q_1$, if necessary. By Lemma \ref{Blowing_corner}, iteratively applied to the elements of $\vertex(P)\setminus f$, viewed as vertices in $\vertex(P\underset f\vee Q_1)\setminus f$, we find a polytope $T$ with the properties:
\begin{enumerate}[\rm(i)]
\item The corner cones of $T$ and $P\underset f\vee\conv(Q,v)$ at every vertex of $P$ coincide;
\item
$P\underset f\vee Q_1\ \D\le\ T\ \subset P\underset f\vee\conv(Q,v)$.
\end{enumerate}

In view of the convergence (\ref{Polar_contraction}) in Section \ref{Psi_lambda} and the property (i) above, for any real number $\epsilon>0$, the following inclusion is satisfied if $k\in\NN$ is sufficiently large:

\begin{align*}
\big(\Phi^{-1}\circ\Psi_{\lambda^k}\circ\Phi\big)(u)\in\B_\epsilon(v),\qquad u\in\vertex(T)\setminus P,%(P\underset f\vee Q),
\end{align*}
where $\B_\epsilon(v)$ stands for the $\epsilon$-ball, centered at $v$. In particular, for any $\epsilon>0$ and sufficiently large $k\in\NN$, we have

\begin{align*}
\big(P\underset f\vee\conv(Q,v)\big)\setminus\B_\epsilon(v)\ =\ \conv\big(Q,\ (\Phi\circ\Psi_{\lambda^k}\circ\Phi)(T\setminus f)\big)\setminus\B_\epsilon(v).
\end{align*}
Consequently, for any $\epsilon>0$ and sufficiently large $k\in\NN$ (depending on $\epsilon$), we can find a pyramidal extension
$$
T'\ \D\subset\ P\underset f\vee\conv(Q,v),
$$
with the properties:

\begin{equation}\label{Last_pyramidal}
\begin{aligned}
T'\ \subset\ \conv\big(Q,\ (\Phi\circ\Psi_{\lambda^k}\circ\Phi)(T\setminus f)\big)\quad\text{and}\quad\d\big(T',P\underset f\vee\conv(Q,v)\big)<\epsilon.
\end{aligned}
\end{equation}

We write
\begin{align*}
P\underset f\vee Q_{k+1}\ =\ &\conv\big(Q,\ \big(\Phi^{-1}\circ\Psi_{\lambda^k}\circ\Phi\big)\big((P\underset f\vee Q_1)\setminus f\big)\big)\ \D\le\\ 
&\conv\big(Q,\ \big(\Phi\circ\Psi_{\lambda^k}\circ\Phi\big)(T\setminus f)\big)\ \q\le\ P\underset f\vee\conv(Q,v),
\end{align*}
where:
\begin{enumerate}[\rm$\centerdot$]
\item
the inequality $\D\le$ results from pushing forward the inequality $P\underset f\vee Q_1\D\le T$ in (ii) above along the composite transformation $\Phi^{-1}\circ\Psi_{\lambda^k}\circ\Phi$,
\item the inequality $\q\le$ follows for the inclusion in (\ref{Last_pyramidal}) for $\epsilon$ sufficiently small. 
\end{enumerate}

\medskip We also have $P\underset f\vee Q\ =\ P\underset f\vee Q_0 \ \D\le\ P\underset f\vee Q_{k+1}$, whereas the inequality in (\ref{Last_pyramidal}) implies
$$
\delta\big(P,P\underset f\vee\conv(Q,v)\big)\le\delta\big(P,P\underset f\vee Q\big)+\epsilon.
$$
\qed

\bigskip\begin{remark}\label{Almost_quasi}
The proof of Theorem \ref{Theorem_C} shows that, for the coincidence of $\q\le$ and $\subset$ on $\POL(d+1)$, it is enough to have the following `quasi' version of Lemma \ref{Blowing_corner}: if $\q\le$ and $\subset$ coincide on $\POL(d)$ and $R\subset S$ are two $(d+1)$-polytopes, sharing $0$ as a vertex, then there exists an intermediate polytope $R\q\le T\subset S$, such that $\RR_+ T=\RR_+S$. Unlike Proposition \ref{Main_sequence}, whose proof also uses induction on dimension and goes through for $\q\le$, we do not know how to involve the quasi-pyramidal growth in the context of Lemma \ref{Blowing_corner}. As for the infinitesimal nature of the pyramidal defect, it is easy to show that it causes no additional challenge.
\end{remark}

\bibliographystyle{plain}
\bibliography{bibliography}

\end{document}